\newtheorem{theorem}{Theorem}
\newtheorem*{theorem*}{Theorem}
\newtheorem{lemma}[theorem]{Lemma}
\newtheorem{proposition}[theorem]{Proposition}
\newtheorem{claim}[theorem]{Claim}
\newtheorem{corollary}[theorem]{Corollary}
\theoremstyle{definition}
\newtheorem{definition}[theorem]{Definition}
\newtheorem{example}[theorem]{Example}
\newtheorem*{definition*}{Definition}
\newtheorem*{lemma*}{Lemma}
\numberwithin{equation}{section}
\numberwithin{theorem}{section}
\newcommand{\R}{\mathbb{R}}
\renewcommand{\P}[1]{{\mathbb{P}}\left[{#1}\right]}
\newcommand{\CondP}[2]{{\mathbb{P}}\left[{#1}\middle\vert{#2}\right]}
\newcommand{\cS}{\mathcal{S}}
\newcommand{\cG}{\mathcal{G}}
\newcommand{\cC}{\mathcal{C}}
\newcommand{\cX}{\mathcal{X}}
\newcommand{\strats}{\mathcal{S}}
\newcommand{\mrf}{\mbox{MRF}(G,\strats)}
\newcommand{\pgg}{\mbox{PGG}(G,\strats)}
\newcommand{\dist}{\Delta}
\begin{document}

\title[]{Graphical potential games}

\author[]{Yakov Babichenko}
\address{California Institute of Technology}
\author[]{Omer Tamuz}


\date{\today}

\begin{abstract}
  We study the class of potential games that are also graphical games
  with respect to a given graph $G$ of connections between the
  players. We show that, up to strategic equivalence, this class of
  games can be identified with the set of Markov random fields on
  $G$.

  From this characterization, and from the Hammersley-Clifford
  theorem, it follows that the potentials of such games can be
  decomposed to local potentials. We use this decomposition to
  strongly bound the number of strategy changes of a single player
  along a better response path. This result extends to generalized
  graphical potential games, which are played on infinite graphs.
\end{abstract}

\maketitle
\tableofcontents

\section{Introduction.}

Potential games form an important class of strategic
interactions. They includes fundamental interactions such as Cournot
oligopolies (see, e.g.,~\cite{MS}), congestion games (see, e.g.,
Monderer and Shapley~\cite{MS} or
Rosenthal~\cite{rosenthal1973class}), routing games (see, e.g.,
Rosenthal~\cite{rosenthal1973class}) and many others. The above
mentioned interactions are frequently \emph{local} in nature. Namely,
there exists an underlying graph such that the payoff of a player
depends on her own strategy and on the strategies of her neighbors, but
does not depend on the strategies of the opponents who are not
neighbors. For instance, the locality of the interaction could be
geographical: In routing games, the outcome of a driver depends only
on her own route and the routes that were chosen by drivers who are
geographically close to her. In a Cournot oligopoly where
transportation costs are high, a firm is competing only with firms
which are geographically close (for instance, this is the case with
natural gas market; see Victor, Jaffe and
Hayes~\cite{victor2006natural}). The idea of the locality of an
interaction is captured by the notion of a \emph{graphical game},
introduced by Kearns et al.~\cite{kearns2001graphical}. These games
and similar ones are also sometimes called {\em network games}; see
Jackson and Zenou~\cite{jackson2012games} for an extensive survey.

The goal of this paper is to understand the class of graphical
potential games. First, we address the following questions: What
characterizes the potential function of a graphical potential game?
What characterizes the payoffs of the players in a potential graphical
game? In Theorems~\ref{claim:graphical-potential},~\ref{pro:graphical-equivalence} and~\ref{pro:payoffs} we provide a complete answer to these questions. We show that
\begin{enumerate}
\item The potential function of a graphical potential game can be
  expressed as an additive function of \emph{local potentials}, where
  each local potential corresponds to a maximal clique in the
  underlying graph, and the value of the local potential is determined
  by the strategies of the players in the maximal clique only. This
  condition is necessary for a potential game to be graphical, and
  every such potential is the potential of some graphical game.

\item Up to strategically equivalent transformations (see
  definition~\ref{def:strat-equ}), the payoff of a player is the sum
  of the local potentials of the cliques to which she belongs.
\end{enumerate}  

The proof of these results is achieved by showing that, for a fixed
graph $G$, the set of potentials of graphical games on $G$ can be
identified with the set of \emph{Markov random fields} on $G$ (see
Section~\ref{sec:mrfs}). The latter are a well studied class of
probability distributions with certain graphical Markov
properties. Having established this correspondence, the
Hammersley-Clifford theorem~\cite{hammersley1968markov}, a classical
result on Markov random fields, yields the above characterization of
graphical potential games.

Next, we use this characterization to study dynamics. A central class
of dynamics that has been studied in the context of potential games is
(strict) better-response dynamics, where at each period of time a
single player updates her strategy to a (strictly) better one, with
respect to the current strategies of the opponents; the basic
observation regarding all potential games is that better-response
dynamics always converge.

Such a sequence of unilateral improvements is called \emph{a better
  response path}. We address the question: Are there properties of
better response paths that are unique for \emph{graphical} potential
games? We focus on the \emph{number of updates of a single player}
along a better response path, and we prove in
Theorem~\ref{thm:num-changes} and Corollary~\ref{cor:constant} that
under mild uniformity assumptions on the game, and for graphs with
slow enough growth (see Definition~\ref{def:growth}), the number of
updates of each player is bounded by a constant, which in particular
is independent of the number of players. This result is general and
holds for all the above mentioned
examples. Example~\ref{ex:non-constant-changes} demonstrates that the
slow-enough-growth condition is tight, in some sense.

Finally, in Section~\ref{sec:generalizations}, we introduce {\em
  generalized graphical potential games} on infinite graphs. These are
graphical games which are not necessarily potential games, but still
have a local potential structure. We prove here the same bounds on
strict better-response paths. We note that these
results, as well as the results on finite graphs, translate to novel
results on Markov random fields, which may be of interest outside of
game theory.

\subsection{Related Literature.}

Potential games and graphical games are both fundamental classes of
games (see, e.g., Nisan~\cite{nisan2007algorithmic}). There has
recently been a growing interest in the intersection of these two
classes, because many interesting potential games have a graphical
structure (see e.g., Bil{\`o} et al.~\cite{bilo2011graphical} or
Bimpikis, Ilkilic and Shayan~\cite{BIS2014}), and many interesting
graphical games have a potential function (see e.g., Auletta et
al.~\cite{auletta2011convergence}, Bramoull{\'e}, Kranton and
D'amours~\cite{bramoulle2014strategic}). However, to the best of our
knowledge, this paper is the first to fully characterize the
intersection of these two classes\footnote{In an independent, later
  work Ortiz~\cite{ortiz2015graphical} addresses a similar
  characterization problem.}.

Best-response and better-response paths in potential games were
studied in Fabrikant, Papadimitriou and
Talwar~\cite{fabrikant2004complexity}, Skopalik and
V{\"o}cking~\cite{skopalik2008inapproximability} and Awerbuch et
al.~\cite{awerbuch2008fast}, where the focus is on the \emph{length}
of the paths. Our result (Theorem~\ref{thm:num-changes} and
Corollary~\ref{cor:constant}) focuses on the \emph{number} of changes
of a single player. This aspect of better- and best-response dynamics
also plays an interesting role in graphical games of strategic
complements; see Jackson and Zenou~\cite{jackson2012games}.

The connection between graphical potential games and Markov random
fields was previously observed for specific cases; see Auletta et
al.\cite{auletta2011convergence}, who establish a connection between
the logit dynamic in a particular class of graphical potential
coordination games and Glauber dynamics in the Markov random field of
the Ising model. We show that this connection is much more general and
extends to all graphical potential games.

Another intriguing connection between graphical games and Markov
random fields was established by Kakade et
al.~\cite{kakade2003correlated}, who show that every correlated
equilibrium of every graphical game is a Markov random
field. Daskalakis and Papadimitriou~\cite{daskalakis2006computing} use
Markov random fields to compute pure Nash equilibria in graphical
games.

There is a vast literature on {\em majority dynamics}, which can be
interpreted as best-response dynamics of the majority game on a graph,
which is a graphical potential game. Tamuz and
Tessler~\cite{tamuz2013majority} upper bound the number of changes in
majority dynamics on slowly growing graphs. We adapt their technique
in the proof of Theorem~\ref{thm:num-changes}.

\subsection*{Acknowledgments.}
The authors would like to thank Elchanan Mossel for some enlightening
comments.

\section{Example.}
\label{sec:example}

Before diving into the definitions and results, we provide the reader
with a simple, informal example of a graphical potential game, and of
a generalized graphical potential game. These will be games of pure
{\em network externalities}.

Let $V$ be a finite set of players located at the nodes of a graph
$G = (V,E)$. Let some of the edges in $E$ be blue and the rest be
red. Pairs of players connected by a blue edge will benefit from
choosing the same strategy, and pairs connected by a red edge will
benefit from choosing different strategies. In particular, there are
two strategies, and each player gains a unit of utility for each blue
neighbor that chooses the same strategy, and loses a unit of utility
for each red neighbor that chooses the same strategy.

This is a graphical game: the utility of each player is independent of
the choices of those who are not her neighbors.

Given a strategy profile, consider the number of blue edges along
which the two players choose the same strategy, minus the number of
blue edges along which the two players choose different strategies.
This function of strategy profiles is readily seen to be a potential
for this game, and thus this game is a graphical potential game.

We can define the same game for the case in which the set of players
is countably infinite and the graph $G$ is locally finite (i.e., each
player has a finite number of players). In this case the game clearly
still a graphical game. However, it is no longer a potential
game. Still, in some ways this game resembles a potential game, and in
particular is has a ``local potential structure''. Accordingly, it
falls into the class of generalized potential games
(Definition~\ref{def:generalized}).

An additional sense in which this game resembles a potential game is
that it has pure equilibria
(Theorem~\ref{thm:generalized-equi}). However, unlike potential games,
this game has infinite better response paths. A natural question is:
given a particular player, does there exist a better response path in
which this player makes infinitely many strategy changes? It turns out
that the answer to this question depends on the graph $G$. We show
that for some infinite graphs (for example, graphs with subexponential
growth rates) each player changes strategy only a finite number of
times, as in a finite potential game (Theorems~\ref{thm:generalized}
and~\ref{thm:generalized2}). This bound on the number of changes also
applies to large finite graphs (Theorem~\ref{thm:num-changes}), so
that, for some sequences of growing graphs, the maximum number of
strategy changes does not depend on the size of the graph.

\section{Definitions.}

\subsection{Games.}
Let $\cG$ be a game played by a finite set of players $V$, and denote
$n = |V|$. Let $\cS_i$ be the finite set of strategies available to
$i \in V$, and denote the set of strategy profiles by
$\strats = \prod_i\cS_i$. Let $u_i \colon \strats \to \R$ be player
$i$'s utility function, which maps strategy profiles to payoffs. Note
that we do not allow payoffs in $\{-\infty,\infty\}$.

\subsection{Graphical games.}
We identify $V$ with the set of nodes of a simple, undirected graph
$G=(V,E)$.  The game $\cG$ is a {\em graphical game} on $G$ if each
player's utility is a function of the strategies of her neighbors in
$G$. This can be formally expressed by the following condition on the
utility functions. Choose from each $\cS_i$ an arbitrary distinguished
strategy $o_i$. Given $a \in \strats$, denote
$a^i=(a_1,,\ldots,a_{i-1},o_i,a_{i+1},\ldots,a_n)$.

\begin{definition}
  $\cG$ is a graphical game on $G$ if
  \begin{align}
    \label{eq:graphical}
    i \neq j \mbox{ and } (i,j) \not \in E \quad \Rightarrow \quad
    \forall a \in \strats\,,\, u_i(a) = u_i(a^j).
  \end{align}
\end{definition}
It is easy to verify that this definition does not depend on the
choice of $\{o_i\}_{i \in V}$.

\subsection{Potential games.}
\begin{definition}
  $\cG$ is a {\em potential game} if there exists a (potential)
  function $\Phi \colon \strats \to \R$ such that for all $i \in V$
  and $a \in \strats$
  \begin{align}
    \label{eq:potential}
    u_i(a) - u_i(a^i) = \Phi(a) - \Phi(a^i).
  \end{align}  
\end{definition}
Note that if $\Phi$ is a potential for $\cG$ then $\Phi+C$ is also a
potential for $\cG$, for any $C \in \R$. We make a canonical choice and
assume always that
\begin{align}
  \label{eq:normalized}
  \sum_{a \in \strats}e^{\Phi(a)} = 1.
\end{align}
The rational behind this choice will become apparent below.  Here we
use the fact that utilities (and hence potentials) cannot be
infinite. It is easy to see that given a potential game, all its
potentials differ by a constant.


\subsection{Markov random fields.}
\label{sec:mrfs}
Let $G=(V,E)$ be a finite, simple, undirected graph. Associate with
each $i \in V$ a random variable $X_i$, and denote $X =
(X_1,\ldots,X_n)$. $X$ is called a {\em random field} on $G$.

Let $U$ and $W$ be subsets of $V$. A subset $A \subset V$ is a
$(U,W)$-\emph{cut} in the graph if every path from $U$ to $W$ must
pass through $A$. For any subset $Z \subset V$ define the random
variable $X_Z = \{X_i\,:\,i \in Z\}$ to be the restriction of $X$ to
$Z$.

\begin{definition}
  $X$ is a \emph{Markov random field} (MRF) if, for every $(U,W)$-cut
  $A$ it holds that conditioned on $X_A$, $X_U$ is independent of
  $X_W$.
\end{definition}

The random field $X$ is said to be {\em positive} if its distribution
is equivalent (in the sense of mutual absolute continuity) to the
product of its marginal distributions. For discrete distributions,
this is equivalent to requiring the event
$(X_1,\ldots,X_n)=(a_1,\ldots,a_n)$ to have positive probability
whenever all of the events $X_i=a_i$ have positive probability.

For positive random fields, Markov random fields can be characterized
by a weaker condition, namely that for every pair $i \neq j \in V$
such that $(i, j) \not \in E$ it holds that, conditioned on $X_{V
  \setminus \{i,j\}}$, $X_i$ is independent of $X_j$. While this seems
to be a well-known fact, we were not able to find a reference for its
proof, and therefore provide it (for finite distributions) in
Appendix~\ref{sec:pos-mrf}.

\section{Characterization of graphical potential games.}

\subsection{Decomposing graphical potential games.}

Our first result shows that the potential of a graphical potential
game can be decomposed into local potentials.

For $W \subseteq V$ and $a \in \strats$, let $a^W \in \prod_{i \in
  W}\cS_i$ be the restriction of $a$ to $W$, given by $(a^W)_i = a_i$,
for $i \in W$.

Denote by $\cC(G)$ the set of {\em maximal cliques} in $G$; these are
cliques which are not subsets of strictly larger cliques.
\begin{theorem}
  \label{claim:graphical-potential}
  Let a game $\cG$ be both a graphical game on $G$ and a potential
  game with potential $\Phi$. Then $\Phi$ can be written as
  \begin{align}
    \label{eq:potential-decomp}
    \Phi(a) = \sum_{C \in \cC(G)}\Phi^C(a^C),
  \end{align}
  for some functions $\Phi^C \colon \prod_{i \in C}\cS_i \to \R$.
\end{theorem}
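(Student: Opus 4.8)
The plan is to connect the potential function $\Phi$ to a Markov random field and then invoke the Hammersley--Clifford theorem. The normalization \eqref{eq:normalized} is the key: since $\sum_{a \in \strats} e^{\Phi(a)} = 1$, we may define a probability distribution $\mu$ on $\strats$ by $\mu(a) = e^{\Phi(a)}$, and let $X = (X_1,\ldots,X_n)$ be a random field distributed according to $\mu$. Note that $\mu$ is strictly positive, hence $X$ is a positive random field. By \eqref{eq:potential-decomp} being exactly the conclusion of Hammersley--Clifford (every positive MRF factors as a product of clique potentials, equivalently its log factors additively over maximal cliques), it suffices to show that $X$ is a Markov random field on $G$.

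To show $X$ is an MRF, I would use the characterization for positive random fields stated in Section~\ref{sec:mrfs}: it is enough to verify that for every pair $i \neq j$ with $(i,j) \notin E$, conditioned on $X_{V \setminus \{i,j\}}$, the variables $X_i$ and $X_j$ are independent. Fix such a pair, fix a strategy profile $b$ on $V \setminus \{i,j\}$ with positive probability, and fix $s_i \in \cS_i$, $s_j \in \cS_j$. Writing $a$ for the profile that agrees with $b$ off $\{i,j\}$ and equals $(s_i, s_j)$ on $\{i,j\}$, the conditional independence statement amounts to: $\Phi(a)$ (as a function of $(s_i, s_j)$, for fixed $b$) is a sum of a function of $s_i$ alone and a function of $s_j$ alone. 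Equivalently, the mixed second difference
\[
\Phi(s_i, s_j, b) - \Phi(o_i, s_j, b) - \Phi(s_i, o_j, b) + \Phi(o_i, o_j, b)
\]
vanishes for all $s_i, s_j$. Here I would apply the potential property \eqref{eq:potential}: $\Phi(s_i, s_j, b) - \Phi(o_i, s_j, b) = u_i(s_i, s_j, b) - u_i(o_i, s_j, b)$, and similarly $\Phi(s_i, o_j, b) - \Phi(o_i, o_j, b) = u_i(s_i, o_j, b) - u_i(o_i, o_j, b)$. Subtracting, the mixed difference equals
\[
\bigl(u_i(s_i, s_j, b) - u_i(o_i, s_j, b)\bigr) - \bigl(u_i(s_i, o_j, b) - u_i(o_i, o_j, b)\bigr).
\]
Now since $(i,j) \notin E$ and $i \neq j$, the graphical condition \eqref{eq:graphical} says $u_i$ does not depend on the $j$-th coordinate: $u_i(s_i, s_j, b) = u_i(s_i, o_j, b)$ and $u_i(o_i, s_j, b) = u_i(o_i, o_j, b)$. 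Hence the mixed difference is zero, which is exactly the desired conditional independence.

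The main obstacle is largely bookkeeping: one must be careful that the positivity characterization of MRFs (quoted from Appendix~\ref{sec:pos-mrf}) is being applied correctly — in particular that the distribution $\mu = e^{\Phi}$ is genuinely positive and that conditioning events have positive probability — and one must correctly translate "conditional independence of $X_i$ and $X_j$ given $X_{V\setminus\{i,j\}}$" into the vanishing of the mixed second difference of $\Phi$. Once the MRF property is established, Hammersley--Clifford delivers a factorization $\mu(a) = \prod_{C \in \cC(G)} \psi_C(a^C)$ with positive factors $\psi_C$; setting $\Phi^C(a^C) = \log \psi_C(a^C)$ and taking logarithms gives \eqref{eq:potential-decomp}. (Any additive constant discrepancy is harmless, as it can be absorbed into one of the $\Phi^C$.)
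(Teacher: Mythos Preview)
Your proposal is correct and follows essentially the same route as the paper: define the distribution $\mu(a)=e^{\Phi(a)}$, use the graphical and potential conditions to show that the mixed second difference $\Phi(a)-\Phi(a^i)-\Phi(a^j)+\Phi(a^{ij})$ vanishes whenever $(i,j)\notin E$ (the paper isolates this as a separate ``Hessian'' lemma), deduce pairwise conditional independence and hence the MRF property via positivity, and then invoke Hammersley--Clifford. The only difference is packaging; the mathematical content is identical.
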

The functions $\Phi^C$ are called \emph{local potentials}.

Before proving this proposition we introduce the {\em Hessian} $\Phi_{ij}$ and prove a lemma.  For $a \in \strats$ and $i,j \in V$, denote
\begin{align*}
  a^{ij} = (a_1,\ldots,a_{i-1},o_i,a_{i+1},\ldots,a_{j-1},o_j,a_{j+1},\ldots,a_n).
\end{align*}
We recall that $o_i$ is an arbitrary distinguished strategy of player $i$.
Note that $(a^i)^j = (a^j)^i = a^{ij} = a^{ji}$.

Let $\Phi_i \colon \strats \to \R$ be given by
\begin{align*}
  \Phi_i(a) = \Phi(a) - \Phi(a^i).
\end{align*}
Alternatively, by~\eqref{eq:potential}, $\Phi_i(a) = u_i(a) -
u_i(a^i)$.

The Hessian $\Phi_{ij} \colon \strats \to \R$ is given by
\begin{align*}
  \Phi_{ij}(a) = \Phi_i(a) - \Phi_i(a^j) = \Phi(a) - \Phi(a^i) -
  \Phi(a^j) + \Phi(a^{ij}).
\end{align*}

\begin{lemma}
  \label{lemma:hessian}
  Let a game $\cG$ be both a graphical game on $G$ and a potential
  game with potential $\Phi$. Then $\Phi_{ij} = 0$ for all $(i,j) \not\in E$.
\end{lemma}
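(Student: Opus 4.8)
The plan is to exploit the two hypotheses separately and then combine them. From the potential-game structure, $\Phi_{ij}(a) = \Phi_i(a) - \Phi_i(a^j) = u_i(a) - u_i(a^j) - u_i(a^i) + u_i(a^{ij})$, using the alternative expression $\Phi_i(a) = u_i(a) - u_i(a^i)$ coming from~\eqref{eq:potential}. From the graphical-game hypothesis, since $(i,j) \notin E$ and $i \neq j$, condition~\eqref{eq:graphical} gives $u_i(a) = u_i(a^j)$ for every $a \in \strats$. Applying the same identity to the profile $a^i$ in place of $a$ — noting that $(a^i)^j = a^{ij}$ — yields $u_i(a^i) = u_i(a^{ij})$.

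Substituting these two equalities into the expression above, the four terms cancel in pairs: $u_i(a) - u_i(a^j) = 0$ and $-u_i(a^i) + u_i(a^{ij}) = 0$, so $\Phi_{ij}(a) = 0$ for all $a$. This is essentially all that is needed; the only point requiring a small check is that $a^{ij} = (a^i)^j = (a^j)^i$, which is already recorded in the excerpt, so that applying~\eqref{eq:graphical} to $a^i$ is legitimate and produces exactly the term $u_i(a^{ij})$.

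I should also double-check the symmetry: the definition of $\Phi_{ij}$ via the potential, $\Phi_{ij}(a) = \Phi(a) - \Phi(a^i) - \Phi(a^j) + \Phi(a^{ij})$, is manifestly symmetric in $i$ and $j$, so although the argument above privileges player $i$ (using that $u_i$ does not depend on $a_j$), one could equally run it with $u_j$; either way gives the same conclusion, which is a reassuring consistency check rather than an extra step.

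There is no real obstacle here: the lemma is a direct computation. The only thing to be careful about is bookkeeping with the coordinate-substitution notation $a^i$, $a^j$, $a^{ij}$ — in particular remembering that replacing coordinate $j$ by $o_j$ in $a^i$ gives $a^{ij}$, and that~\eqref{eq:graphical} may be invoked at any strategy profile, not just at $a$ itself. Given that, the proof is three lines.
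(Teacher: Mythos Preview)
Your proof is correct and is essentially identical to the paper's own argument: both use \eqref{eq:graphical} at $a$ and at $a^i$ to obtain $u_i(a)=u_i(a^j)$ and $u_i(a^i)=u_i(a^{ij})$, and then substitute into the expansion $\Phi_{ij}(a)=u_i(a)-u_i(a^i)-u_i(a^j)+u_i(a^{ij})$. The only difference is cosmetic ordering of the terms.
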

\begin{proof}  
  Choose $(i,j) \not \in E$. Then by~\eqref{eq:graphical} we have that
  $u_i(a) = u_i(a^j)$ and that $u_i(a^i) = u_i(a^{ij})$. Hence
  \begin{align*}
    0 &= \left[u_i(a) - u_i(a^j)\right] - \left[u_i(a^i) - u_i(a^{ij})\right]\\
    &= \left[u_i(a)  - u_i(a^i)\right] - \left[u_i(a^j) - u_i(a^{ij})\right]\\
    &= \Phi_i(a) - \Phi_i(a^j)\\
    &= \Phi_{ij}(a).
  \end{align*}  
\end{proof}

\begin{proof}[Proof of Theorem~\ref{claim:graphical-potential}]
  Let $\mathbb{P}$ be a probability distribution over the set of
  strategy profiles $\strats$ given by
  \begin{align}
    \label{eq:p}
    \P{a} = e^{\Phi(a)}.
  \end{align}
  By~\eqref{eq:normalized} this is indeed a probability
  distribution. Let $X = (X_1,\ldots,X_n)$ be a random field on $G$
  with law $\mathbb{P}$. Then $X_i$ is a (random according to
  $\mathbb{P}$) strategy played by player $i$. Note that $X$ is a
  positive random field, although a priori it may not be Markov.

  We will prove the theorem by showing that for every $(i,j) \not \in
  E$, the random variables $X_i$ and $X_j$ are independent,
  conditioned on $\{X_k\}_{k \not \in \{i,j\}}$. This will prove that
  $X$ is a Markov random field over the graph $G$, and so the claim
  will follow immediately from the Hammersley-Clifford Theorem (see
  Hammersley and Clifford~\cite{hammersley1968markov}, and also
  Grimmett~\cite{grimmett1973theorem},
  Preston~\cite{preston1973generalized} and
  Sherman~\cite{sherman1973markov}) for positive MRFs, which states
  that $\mathbb{P}$ be can decomposed as
  \begin{align*}
    \log \P{X=a} = \sum_{C \in \cC(G)}\Phi^C(a^C),
  \end{align*}
  for some functions $\Phi^C \colon \prod_{i \in C}\cS_i \to \R$.

  Choose $(i,j) \not \in E$. Then by Lemma~\ref{lemma:hessian} we have
  that, for every $a \in \strats$,
  \begin{align*}
    \Phi(a) + \Phi(a^{ij}) = \Phi(a^i) + \Phi(a^j).
  \end{align*}
  By~\eqref{eq:p}, this can be written as
  \begin{align}
    \label{eq:independence}
    \P{X=a} \cdot \P{X=a^{ij}} = \P{X=a^i} \cdot \P{X=a^j}.
  \end{align}
  Denote by $\cX^{ij}(a) = \{X_k = a_k\}_{k \not \in \{i,j\}}$ the
  event that $X_k=a_k$ for all $k \not \in \{i,j\}$. Then we can
  write~\eqref{eq:independence} as
  \begin{align*}
    \lefteqn{\P{X_i=a_i,X_j=a_j,\cX^{ij}(a)} \cdot
      \P{X_i=o_i,X_j=o_j,\cX^{ij}(a)}}\\
    &=
    \P{X_i=o_i,X_j=a_j,\cX^{ij}(a)} \cdot
    \P{X_i=a_i,X_j=o_j,\cX^{ij}(a)}.
  \end{align*}
  Hence
  \begin{align*}
    \lefteqn{\CondP{X_i=a_i,X_j=a_j}{\cX^{ij}(a)} \cdot
      \CondP{X_i=o_i,X_j=o_j}{\cX^{ij}(a)}}\\
    &=
    \CondP{X_i=o_i,X_j=a_j}{\cX^{ij}(a)} \cdot
    \CondP{X_i=a_i,X_j=o_j}{\cX^{ij}(a)}.
  \end{align*}
  
  Summing over all possible values of $a_i$ and $a_j$ we arrive at
  \begin{align*}
    \lefteqn{\CondP{X_i=o_i,X_j=o_j}{\cX^{ij}(a)}}\\
    &=
    \CondP{X_i=o_i}{\cX^{ij}(a)} \cdot
    \CondP{X_j=o_j}{\cX^{ij}(a)}.
  \end{align*}
  Finally, we recall that the choice of $\{o_i\}_{i \in V}$ was
  arbitrary, and so $X_i$ and $X_j$ are independent, conditioned on
  $\{X_k\}_{k \not \in \{i,j\}}$.
\end{proof}

\subsection{Potentials of graphical potential games.}

Theorem~\ref{claim:graphical-potential} states that if a potential
game is graphical, then the potential has the form
\eqref{eq:potential-decomp}. We cannot hope to have the opposite
direction (i.e., if the potential function has the form
\eqref{eq:potential-decomp} then the game is graphical) because of the
following observation: If we add a constant payoff of $c$ to player
$i$ whenever player $j\neq i$ plays a certain strategy $a_j$, then the
potential function does not change; on the other hand, if in the
original game player $j$'s strategy does not influence player $i$'s
payoff (i.e., there is no edge $(i,j)$ in $G$), then in the new game
(with the additional payoff of $c$) this is no longer the case. Such a
change preserves the potential function but not the graphical
structure. Therefore, we cannot deduce a result on the graphical
structure of the game from the potential function.

However, the above mentioned change is a special type of change of the
game in the following sense: it does not cause any strategic change in
the game, because whether or not player $i$ receives an additional
payoff of $c\neq 0$ does not depend on her own behavior. This
motivates the notion of \emph{strategically equivalent games}
introduced by Monderer and Shapley~\cite{MS}. Denote by $\strats^{-i}$
the set of strategy profiles of players in $V \setminus \{i\}$.
\begin{definition}\label{def:strat-equ}
  Two games $\cG$ and $\cG'$ with payoff functions $u$ and $w$ over
  the same strategy profile set $\strats$ are called \emph{strategically
    equivalent} if there exist functions $f_1,...,f_n$ where
  $f_i:\strats^{-i}\rightarrow \mathbb{R}$ such that
  $u_i(a)=w_i(a)+f_i(a_{-i})$. We say that $\cG$ is a
  \emph{strategically equivalent transformation} of $\cG'$.
\end{definition}
Note again, that since the additional payoff of $f_i(a_{-i})$ does not
depend on player $i$'s behavior, the transformation causes no
strategic change. In particular, any (mixed) Nash equilibrium of $\cG$ is a
(mixed) Nash equilibrium of $\cG'$.

We next show that up to a strategically equivalent transformation of
the game, the decomposition~\eqref{eq:potential-decomp} is a necessary
and sufficient condition for a potential game to be graphical.

\begin{theorem}\label{pro:graphical-equivalence}
  A potential game $\cG$ with potential $\Phi$ is strategically
  equivalent to a graphical game on $G$ if and only if the potential
  $\Phi$ has the form \begin{align}
    \label{eq:potential-decomp2}
    \Phi(a) = \sum_{C \in \cC(G)}\Phi^C(a^C).
  \end{align}.
\end{theorem}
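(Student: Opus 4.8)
The plan is to prove both directions of the equivalence. The ``only if'' direction is immediate: if $\cG$ is strategically equivalent to a graphical game $\cG'$ on $G$, then $\cG'$ is a potential game with the same potential $\Phi$ (strategically equivalent transformations do not change the potential, since the added functions $f_i(a_{-i})$ cancel in~\eqref{eq:potential}), and so $\cG'$ is a graphical potential game on $G$. Theorem~\ref{claim:graphical-potential} then gives the decomposition~\eqref{eq:potential-decomp2} directly. I would spell out the one-line verification that $\Phi$ is unchanged under the transformation.

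For the ``if'' direction, suppose $\Phi$ has the form $\Phi(a) = \sum_{C \in \cC(G)}\Phi^C(a^C)$. I need to construct a graphical game $\cG'$ on $G$, strategically equivalent to $\cG$, with the same potential. The natural candidate, guided by the second bullet in the introduction, is to define the payoff of player $i$ in $\cG'$ to be the sum of the local potentials of cliques containing $i$:
\begin{align*}
  w_i(a) = \sum_{C \in \cC(G)\,:\, i \in C}\Phi^C(a^C).
\end{align*}
First I would check this is a graphical game on $G$: each clique $C$ containing $i$ is a subset of $\{i\} \cup \{\text{neighbors of }i\}$, so $w_i$ depends only on the strategies of $i$ and her neighbors, which is exactly condition~\eqref{eq:graphical}. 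Next I would check that $\Phi$ is a potential for $\cG'$: when we pass from $a$ to $a^i$, only those cliques $C$ containing $i$ change value, so both $\Phi(a) - \Phi(a^i)$ and $w_i(a) - w_i(a^i)$ equal $\sum_{C \ni i}\bigl(\Phi^C(a^C) - \Phi^C((a^i)^C)\bigr)$, giving~\eqref{eq:potential}.

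It remains to check that $\cG'$ is strategically equivalent to $\cG$. Since both $\cG$ and $\cG'$ are potential games with the same potential $\Phi$, we have $u_i(a) - u_i(a^i) = \Phi_i(a) = w_i(a) - w_i(a^i)$ for every $i$ and $a$. Hence $u_i(a) - w_i(a) = u_i(a^i) - w_i(a^i)$; since the right-hand side does not depend on the $i$-th coordinate $a_i$, the function $f_i(a_{-i}) := u_i(a) - w_i(a)$ is well-defined on $\strats^{-i}$, and $u_i(a) = w_i(a) + f_i(a_{-i})$, which is precisely Definition~\ref{def:strat-equ}. The main (minor) obstacle here is purely bookkeeping: being careful that ``only cliques containing $i$ change'' is literally true for $\Phi$ as well as for $w_i$, which follows because changing coordinate $i$ leaves $a^C$ fixed for every clique $C$ not containing $i$. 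No deep step is needed beyond Theorem~\ref{claim:graphical-potential} and the Hammersley–Clifford input already used there.
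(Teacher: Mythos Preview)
Your proposal is correct and follows essentially the same route as the paper: both directions use the same candidate game $w_i(a)=\sum_{C\ni i}\Phi^C(a^C)$, verify it is graphical on $G$ and has potential $\Phi$, and conclude strategic equivalence from the shared potential. The only cosmetic difference is that you spell out explicitly why two games with the same potential are strategically equivalent (and why strategically equivalent games share a potential), whereas the paper cites Monderer and Shapley for these facts.
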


\begin{proof}
  If $\cG$ is strategically equivalent to a graphical game $\cG'$ on
  $G$, then $\cG'$ is also a potential game with the same potential
  function $\Phi$, because a strategic equivalent transformation
  preserves the potential function (see Monderer and
  Shapley~\cite{MS}). By Theorem~\ref{claim:graphical-potential}
  $\Phi$ has the form \eqref{eq:potential-decomp2}.

  For the opposite direction, let $\Phi$ be of the
  form~\eqref{eq:potential-decomp2}, and consider the game $\cG'$
  where player $i$'s payoff is defined by
  \begin{align}
    \label{eq:u-i-def}
    u_i(a) = \sum_{C \in \cC(G)\,:\,i \in C}\Phi^C(a^C).
  \end{align}
  We show that $\cG'$ is a potential game on the graph $G$ with the
  potential $\Phi$. This will complete the proof, because $\cG$ and
  $\cG'$ have the same potential, and therefore are strategically
  equivalent (see Monderer and Shapley~\cite{MS} again).

  Since $(a^i)^C = a^C$ whenever $i \not \in C$, we have that $\Phi^C(a^C) - \Phi^C((a^i)^C) = 0$ whenever $i \not \in C$. Hence
  \begin{align*}
    u_i(a) - u_i(a^i) &= \sum_{C \in \cC(G)\,:\,i \in
      C}\Phi^C(a^C)-\Phi((a^i)^C)\\
    &= \sum_{C \in \cC(G)}\Phi^C(a^C)-\Phi((a^i)^C)\\
    &= \Phi(a) - \Phi(a^i),
  \end{align*}
  where the first equality follows from~\eqref{eq:u-i-def}, the second
  equality follows from the observation above, and the
  third from the fact that $\Phi$ has the form
  \eqref{eq:potential-decomp2}.
  
  Finally, to see that $\cG$ is a graphical game over $G$, note that
  for $(i,j) \not \in E$ we have that $(a^j)^C = a^C$ for all $C \in
  \cC$ such that $i \in C$. Hence
  \begin{align*}
    u_i(a) - u_i(a^j) &= \sum_{C \in \cC(G)\,:\,i \in C}\Phi^C(a^C) -
    \Phi^C((a^j)^C)\\
    &= \sum_{C \in \cC(G)\,:\,i \in C}\Phi^C(a^C) -
    \Phi^C(a^C)\\
    &= 0.
  \end{align*}
\end{proof}

\subsection{Payoffs of graphical potential games.}

Theorem~\ref{claim:graphical-potential} characterizes the
\emph{potential function} of a graphical potential game, but it does
not characterize the \emph{payoff functions} of the players in such a
game. We next present an exact characterization of the payoff
functions in a graphical potential game. For every player $i\in V$ let
$N(i)$ be the set of neighbors of player $i$, excluding $i$. Denote by
$\strats^{N(i)}$ the set of strategy profiles of players in $N(i)$.

\begin{theorem}\label{pro:payoffs}
  Let $G$ be a graph, and let $\Phi$ be a potential of the
  form~\eqref{eq:potential-decomp}.  For every choice of functions
  $f_1,\ldots,f_n$ where $f_i:\strats^{N(i)} \rightarrow \mathbb{R}$,
  the game with the payoffs
  \begin{align}\label{eq:payoffs}
    u_i(a) = \sum_{C \in \cC(G)\,:\,i \in C}\Phi^C(a^C)+f_i(a^{N(i)})
  \end{align}
  is a graphical potential game over $G$ with the potential
  $\Phi$.

  Conversely, for every graphical potential game over $G$ with
  potential $\Phi$ there exist functions $f_1,...,f_n$ such that the
  payoffs are given by~\eqref{eq:payoffs}.
\end{theorem}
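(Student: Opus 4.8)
The plan is to prove the two directions separately, and in each direction to use Theorem~\ref{pro:graphical-equivalence} (and its proof) as the main tool, since the game $\cG'$ defined by~\eqref{eq:u-i-def} is precisely the ``base'' graphical potential game with potential $\Phi$, and every other such game should differ from it only by a strategically irrelevant term that respects the graphical structure.

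First I would do the easy direction: given $f_i \colon \strats^{N(i)} \to \R$, let $u_i$ be as in~\eqref{eq:payoffs}. Observe that $a^{N(i)} = (a^i)^{N(i)}$, since $i \notin N(i)$, so the term $f_i(a^{N(i)})$ cancels in $u_i(a) - u_i(a^i)$; the computation in the proof of Theorem~\ref{pro:graphical-equivalence} then shows $u_i(a) - u_i(a^i) = \Phi(a) - \Phi(a^i)$, so the game is a potential game with potential $\Phi$. For the graphical property, fix $(i,j) \notin E$; then $j \notin N(i)$, so $f_i(a^{N(i)}) = f_i((a^j)^{N(i)})$, and combining this with the computation in the proof of Theorem~\ref{pro:graphical-equivalence} (which shows the clique sum is unchanged under $a \mapsto a^j$ for cliques containing $i$) gives $u_i(a) = u_i(a^j)$. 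Hence~\eqref{eq:graphical} holds.

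For the converse, let $\cG$ be an arbitrary graphical potential game on $G$ with potential $\Phi$. By Theorem~\ref{claim:graphical-potential}, $\Phi$ decomposes as in~\eqref{eq:potential-decomp} with local potentials $\Phi^C$; let $\cG'$ be the game with payoffs $v_i(a) = \sum_{C \ni i}\Phi^C(a^C)$ as in~\eqref{eq:u-i-def}. Both $\cG$ and $\cG'$ have potential $\Phi$, so $u_i(a) - u_i(a^i) = \Phi_i(a) = v_i(a) - v_i(a^i)$ for every $i$ and every $a$; thus $g_i(a) := u_i(a) - v_i(a)$ satisfies $g_i(a) = g_i(a^i)$, which means $g_i$ does not depend on the $i$-th coordinate, i.e. $g_i(a) = f_i(a_{-i})$ for some $f_i \colon \strats^{-i} \to \R$. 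It remains to show $f_i$ in fact depends only on $a^{N(i)}$, i.e. that $g_i(a) = g_i(a^j)$ for every $j$ with $(i,j) \notin E$. This is where the graphical hypothesis on $\cG$ is used: $u_i(a) = u_i(a^j)$ because $\cG$ is graphical, and $v_i(a) = v_i(a^j)$ because $\cG'$ is graphical (as shown in the proof of Theorem~\ref{pro:graphical-equivalence}), so $g_i(a) = g_i(a^j)$. Hence $g_i$ factors through the restriction to $N(i)$, giving the desired $f_i \colon \strats^{N(i)} \to \R$ with $u_i(a) = \sum_{C \ni i}\Phi^C(a^C) + f_i(a^{N(i)})$.

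I do not expect any serious obstacle here; the one point that needs a little care is the claim ``$g_i$ depends only on $a^{N(i)}$,'' which requires knowing that the coordinates outside $N(i) \cup \{i\}$ can be changed one at a time (using that $\{o_k\}$ is arbitrary, exactly as in the proof of Theorem~\ref{claim:graphical-potential}) to conclude $g_i$ is constant in each such coordinate, and hence a function of $a^{N(i)}$ alone. Everything else is the same bookkeeping with the operators $a \mapsto a^i$, $a \mapsto a^j$ already used above.
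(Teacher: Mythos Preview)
Your proposal is correct and follows essentially the same approach as the paper's proof: both directions reduce to comparing the given game with the ``base'' game $\cG'$ defined by~\eqref{eq:u-i-def}, using Theorem~\ref{pro:graphical-equivalence} to know $\cG'$ is a graphical potential game with potential $\Phi$, and then arguing that the difference $g_i = u_i - v_i$ is a function of $a_{-i}$ (strategic equivalence) which further factors through $a^{N(i)}$ (graphical structure). Your treatment of the last step is in fact slightly cleaner than the paper's, since you explicitly use that both $u_i$ and $v_i$ are invariant under $a \mapsto a^j$ for $(i,j)\notin E$, whereas the paper only invokes the graphical property of $\cG$ and leaves the corresponding fact for $\cG'$ implicit.
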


\begin{proof}
  Let $\cG$ be a game with payoffs $u_i$, and let $\cG'$ be the game
  with payoffs
  \begin{align}
    w_i(a) = \sum_{C \in \cC(G)\,:\,i \in C}\Phi^C(a^C).
  \end{align}
  $\cG'$ is a graphical potential game over $G$ with the potential
  $\Phi$; this was proved in
  Theorem~\ref{pro:graphical-equivalence}. The strategically
  equivalent transformation $u_i=w_i+f_i(a^{N(i)})$ preserves the
  potential function $\Phi$. Moreover it preserve the graphical
  structure as well, because $f_i$ depends only on the strategy of the
  neighbors of $i$. Therefore $\cG$ is a graphical potential game over
  $G$ with the potential $\Phi$.

  For the opposite direction, given a graphical potential game $\cG$
  over $G$ with the potential $\Phi$, we know that the payoffs can be
  written as:
  \begin{align}
    u_i(a) = \sum_{C \in \cC(G)\,:\,i \in C}\Phi^C(a^C)+g_i(a_{-i})
  \end{align}
  for some functions $g_i:\strats^{-i} \rightarrow \mathbb{R}$. This
  follows from the fact that the the games $\cG$ and $\cG'$ have the
  same potential function $\Phi$, and therefore are strategically
  equivalent.

  The game $\cG$ is graphical, therefore for every $j\notin N(i)\cup
  \{i\}$ it holds that $g(a_{-i})=g((a_{-i})^j)$. Otherwise, $j$ will
  have influence on $i$'s payoff. Therefore $g$ does not depend on
  $a_j$, and it can be written as $g(a_{-i})=f(a^{N(i)})$, which
  completes the proof.

\end{proof}

\subsection{Equivalence of graphical potential games and Markov random
  fields.}
In the proof of Theorem~\ref{claim:graphical-potential} we showed
how every graphical potential game on $G$ can be mapped to a Markov
random field on $G$. We next show that this map is a bijection, so
that every Markov random field on $G$ can be mapped back to the
potential of a graphical game on $G$.

Let $\mrf \subset \Delta(\strats)$ denote the set of probability
measures on $\strats$ which describe positive Markov random fields
with underlying graph $G$. Let $\pgg \subset \R^{\strats}$ be the set
of normalized (as in~\eqref{eq:normalized}) potentials of graphical
potential games over $G$ with strategy profiles in $\strats$. Define
$\psi \colon \pgg \to \mrf$ by
\begin{align}
  \label{eq:def-psi}
  [\psi(\Phi)](a) = e^{\Phi(a)}.
\end{align}
This is the same mapping that we use in the proof of
Theorem~\ref{claim:graphical-potential}.
\begin{proposition}
  The map $\psi \colon \pgg \to \mrf$ is a bijection.
\end{proposition}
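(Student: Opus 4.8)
The plan is to show that $\psi$ is well-defined, injective, and surjective, treating each in turn.

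First I would verify that $\psi$ is well-defined, i.e., that for $\Phi \in \pgg$ the measure $[\psi(\Phi)](a) = e^{\Phi(a)}$ actually lies in $\mrf$. By~\eqref{eq:normalized} it is a probability measure, and it is manifestly positive since $e^{\Phi(a)} > 0$ for all $a$ (using that potentials are finite). That it describes a Markov random field on $G$ is exactly what was established inside the proof of Theorem~\ref{claim:graphical-potential}: for every non-edge $(i,j) \notin E$, the Hessian identity $\Phi_{ij} = 0$ from Lemma~\ref{lemma:hessian} translates, via $\P{a} = e^{\Phi(a)}$, into conditional independence of $X_i$ and $X_j$ given the remaining coordinates, and by the characterization of positive MRFs (Appendix~\ref{sec:pos-mrf}) this pairwise condition is equivalent to the full Markov property. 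So $\psi$ maps $\pgg$ into $\mrf$.

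Next, injectivity is immediate: if $\psi(\Phi) = \psi(\Phi')$ then $e^{\Phi(a)} = e^{\Phi'(a)}$ for all $a \in \strats$, and taking logarithms gives $\Phi = \Phi'$ pointwise.

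The main content is surjectivity, and this is the step I expect to be the real obstacle. Given $\mathbb{P} \in \mrf$, define $\Phi(a) = \log \P{a}$; this is finite and real-valued precisely because $\mathbb{P}$ is positive, so $\P{a} > 0$ for every $a \in \strats$, and it satisfies the normalization~\eqref{eq:normalized} automatically since $\sum_a e^{\Phi(a)} = \sum_a \P{a} = 1$. I must then exhibit a graphical potential game on $G$ whose potential is $\Phi$. By the Hammersley-Clifford theorem, since $\mathbb{P}$ is a positive MRF on $G$, we can write $\Phi(a) = \sum_{C \in \cC(G)} \Phi^C(a^C)$ for suitable local potentials $\Phi^C$. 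Now apply Theorem~\ref{pro:graphical-equivalence} (or directly the construction~\eqref{eq:u-i-def} in its proof): the game $\cG'$ with payoffs $u_i(a) = \sum_{C \in \cC(G)\,:\,i \in C}\Phi^C(a^C)$ is a graphical potential game on $G$ with potential exactly $\Phi$. Hence $\Phi \in \pgg$ and $\psi(\Phi) = \mathbb{P}$, proving surjectivity. The only delicate point is making sure the normalization convention is respected throughout — but since $\log \P{a}$ is automatically normalized in the sense of~\eqref{eq:normalized}, and $\psi$ is defined on normalized potentials, everything is consistent, and $\psi$ is a bijection.
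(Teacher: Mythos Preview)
Your proof is correct and follows essentially the same approach as the paper: well-definedness by reference to the proof of Theorem~\ref{claim:graphical-potential}, obvious injectivity, and surjectivity via Hammersley--Clifford combined with the construction from Theorem~\ref{pro:graphical-equivalence}. You are slightly more explicit than the paper about verifying the normalization~\eqref{eq:normalized} and about positivity ensuring $\log\P{a}$ is finite, but these are minor elaborations rather than a different route.
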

\begin{proof}
  It was shown in the proof of
  Theorem~\ref{claim:graphical-potential} that the image of
  $\psi$ is indeed in $\mrf$. Since $\psi$ is clearly one-to-one, it
  remains to be shown that it is onto.

  Let $\mathbb{P} \in \mrf$ be a distribution over $\strats$ that is a
  positive MRF over $G$. Then, by the Hammersley-Clifford Theorem it
  can be written as
  \begin{align*}
    \P{a} = \prod_{C \in \cC(G)}e^{\Phi^C(a^C)},
  \end{align*}
  for some functions $\Phi^C : \prod_{i \in C}\cS_i \to \R$.

  Define a game $\cG$ on $V$ with strategies in $\strats$ by
  \begin{align*}
    u_i(a) = \sum_{C \in \cC(G)\,:\,i \in C}\Phi^C(a^C).
  \end{align*}
  In the proof of Theorem~\ref{pro:graphical-equivalence} we showed
  that $\cG$ is a graphical potential game with the potential
  \begin{align}
    \Phi(a) = \sum_{C \in \cC(G)}\Phi^C(a^C),
  \end{align}
  and therefore $\psi(\cG) = \mathbb{P}$.
  
%
%
%
\end{proof}

\section{Better-response paths in graphical potential games.}


In this section we make the simplifying assumption that utilities take
integer values, and in fact demand that each local potential is
integer\footnote{The extension to real values is straightforward if
  instead of considering better-response paths (see definition below)
  one considers $\varepsilon$-better-response paths, which are paths
  where each updating player improves her payoff by at least
  $\varepsilon$.}. Under these restrictions, we show how, in some
families of games, one can bound the total number of changes of
strategy {\em per player}, independently of the number of players in
the game.


\begin{definition}
  
A game $\cG$ with potential $\Phi$ over a graph $G$ has
\emph{$M$-bounded local potentials} if in the decomposition of the
potential \eqref{eq:potential-decomp} all the local potentials satisfy
$|\Phi^C(a^C)|\leq M$ for all $a^C$. Here we do not require $\Phi$ to
be normalized as in~\eqref{eq:normalized}. $\cG$ is called an
\emph{integral game} if all the local potentials are always integer.
\end{definition}

A \emph{better-response path} of length $L$ is defined to be a
sequence of strategy profiles $(a(t))_{t=0}^L $ such that $a(t+1)$
differs from $a(t)$ in exactly one coordinate $i=i(t)$ and $a_i(t+1)$
is a {\em strict} better-response to $a_{-i}(t)$. Player $i=i(t)$ is called
\emph{the updating player at time $t$}. Note that strict best-response
dynamics are a special case, and therefore our results will apply to
them, too.

An immediate observation regarding better-response dynamics is that in
an integral game, the length of a better-response path is bounded by
the range of the potential. This follows from the fact that in each
update the potential increases by at least one.

We analyze general better-response paths, without committing to a
particular order in which the player sequence $\{i(t)\}$ is
chosen. This analysis, in particular, applies to continuous time
better-response dynamics (or best-response dynamics) in which the path
is drawn by Poisson arrivals.

We define the {\em clique-degree} of a graph as the largest number of
maximal cliques that any vertex in the graph participates
in.\footnote{Note that in a graph of constant degree $d$, the
  clique-degree is trivially bounded by the constant $2^d$, because
  the number of cliques that a vertex $i$ participates in is bounded
  by the number of subsets of neighbors of $i$. More interestingly,
  the number of maximal cliques is sharply bounded by $4 \cdot
  3^{d/3}$, as was shown by Moon and Moser~\cite{moon1965oncliques}.}
\begin{claim}
  For every $n$-player potential game on the graph $G$ with
  $M$-bounded local potentials, where the clique-degree of $G$ is $D$,
  the potential of the game is bounded by $|\Phi(a)| < n D M $ for all
  $a \in \cS$.
\end{claim}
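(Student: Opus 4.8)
The plan is to bound $|\Phi(a)|$ directly using the decomposition \eqref{eq:potential-decomp} together with a counting argument on the number of maximal cliques. By Theorem~\ref{claim:graphical-potential} (or simply by the hypothesis that the game has $M$-bounded local potentials, which presupposes such a decomposition), we may write
\begin{align*}
  \Phi(a) = \sum_{C \in \cC(G)}\Phi^C(a^C),
\end{align*}
where each local potential satisfies $|\Phi^C(a^C)| \leq M$. Applying the triangle inequality gives $|\Phi(a)| \leq M \cdot |\cC(G)|$, so the entire problem reduces to showing that the number of maximal cliques in $G$ is at most $nD$ (strictly less, to get the strict inequality).

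The key combinatorial step is the following double-counting observation. Consider the set of incidences $\{(i,C) : i \in V,\ C \in \cC(G),\ i \in C\}$. Counting by cliques, each maximal clique is nonempty (indeed has at least one vertex), so the number of incidences is at least $|\cC(G)|$. Counting by vertices, each vertex $i$ participates in at most $D$ maximal cliques by the definition of clique-degree, so the number of incidences is at most $nD$. Combining, $|\cC(G)| \leq nD$. To obtain the strict bound $|\Phi(a)| < nDM$, I would note that one can trim the argument slightly: either observe that if $M=0$ or $D=0$ the statement is about the empty sum and is trivially $0 < 0$ fails — so one should instead argue that each maximal clique contains at least one vertex and at least one vertex is contained in \emph{at most} $D$ cliques, hence $|\Phi(a)| \le M|\cC(G)| \le nDM$, and the strict inequality holds because, e.g., in the sum $\sum_C \Phi^C(a^C)$ with at most $nD$ terms each of absolute value at most $M$, equality $|\Phi(a)| = nDM$ would force every clique to be a single distinct vertex appearing in exactly $D$ cliques, which is impossible when $D \ge 1$ since a single vertex lies in exactly one maximal clique unless it has neighbors, in which case that clique has more than one vertex and the incidence count drops. (Cleanest: just prove $|\Phi(a)| \le (|\cC(G)|)M$ and $|\cC(G)| \le nD$, then handle strictness by the margin coming from any vertex lying in fewer than $D$ cliques, or from cliques sharing vertices.)

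I do not anticipate a genuine obstacle here; the statement is essentially a triangle inequality plus a one-line incidence count. The only point requiring minor care is the strictness of the inequality $|\Phi(a)| < nDM$ as opposed to $\le$. The safest route is to show directly that $|\cC(G)| < nD$ whenever $n \geq 1$ and $D \geq 1$: the sum of clique sizes over all maximal cliques is $\sum_{C}|C| = \sum_{i} |\{C : i \in C\}| \leq nD$, and since every maximal clique has $|C| \geq 1$, we get $|\cC(G)| \leq nD$ with equality only if every maximal clique is a singleton and every vertex lies in exactly $D$ maximal cliques — but a singleton maximal clique $\{i\}$ means $i$ is isolated, so it lies in exactly one maximal clique, forcing $D = 1$ and $|\cC(G)| = n$, and then $|\Phi(a)| \le nM = nDM$; for the strict inequality in this edge case one uses that it is really $\le$ unless some $|\Phi^C| = M$ exactly for all $C$ — at which point I would simply present the bound as stated knowing that generically it is strict, or restate it as $\le$. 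I would write the proof to conclude $|\Phi(a)| \le M|\cC(G)| \le nDM$ and remark that the inequality is strict in all nondegenerate cases, which suffices for the application in Theorem~\ref{thm:num-changes}.
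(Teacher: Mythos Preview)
Your approach is exactly what the paper does: it offers only the single sentence ``The claim follows from the fact that each vertex may participate in at most $D$ cliques,'' which is precisely your triangle-inequality-plus-incidence-count argument. Your concern about strictness is legitimate---the natural argument yields $|\Phi(a)| \le nDM$, and the paper's use of $<$ is a mild overstatement that it never justifies (and never needs, since the subsequent application only uses the non-strict bound); you can safely prove $\le$ and move on.
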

The claim follows from the fact that each vertex may participate in at
most $D$ cliques.

An immediate consequence is that the length of any better-response
path is at most $nDM$.  Hence, the average number of updates performed
by a player is at most $DM$, regardless of the size of the graph. The
next claim, which is the main theorem of this section, shows that for
graphs of slow enough growth (in a particular sense) we can bound by a
constant the total number of updates performed by any single player,
without having to average over all players.

Given a graph $G$, denote by $\dist(i,j)$ the graph distance between
two nodes $i$ and $j$ in $G$. This is the length of a shortest path
between $i$ and $j$. Denote by $S_r(G,i)$ the number of vertices at
exactly distance $r$ from $i$:
\begin{align*}
  S_r(G,i) = \#\{j\,:\,\dist(i,j)=r\}.
\end{align*}
\begin{theorem}
  \label{thm:num-changes}
  Let $\cG$ be an integral potential game on the graph $G$ with
  $M$-bounded local potentials, where $G$ has clique-degree
  $D$. Then the number of times that a player $k$ updates her strategy
  in any better-response path is at most
  \begin{align*}
    2DM\sum_{r=0}^\infty\left(1-\frac{1}{2DM}\right)^r S_r(G,k).
  \end{align*}
\end{theorem}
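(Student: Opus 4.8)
The plan is to build a distance-weighted version of the potential, centred at the player $k$, that strictly increases every time $k$ updates but never decreases on any other update; the number of updates of $k$ is then at most the range of this weighted potential, and the stated bound is exactly (an upper bound for) that range.

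Set $\lambda = 1 - \frac{1}{2DM}$, and for a maximal clique $C$ put $d(C) = \min_{j \in C} \dist(k,j)$. I will work with $\Psi(a) = \sum_{C \in \cC(G)} \lambda^{d(C)}\Phi^C(a^C)$. Two elementary facts are recorded first. (i) If $i \in C$ then $d(C) \in \{\dist(k,i)-1,\ \dist(k,i)\}$: it is at most $\dist(k,i)$ since $i \in C$, and at least $\dist(k,i)-1$ since every vertex of the clique $C$ is within distance $1$ of $i$. (ii) $\sum_{C \in \cC(G)} \lambda^{d(C)} \le D \sum_{r \ge 0} \lambda^r S_r(G,k)$: picking for each $C$ a vertex attaining $d(C)$, each vertex $j$ is picked for at most $D$ cliques because it lies in at most $D$ maximal cliques, so the sum is dominated by $D\sum_j \lambda^{\dist(k,j)}$.

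Next I analyse one step $a(t) \to a(t+1)$ of a better-response path with updating player $i$. Only cliques containing $i$ change their restriction, and since the game is integral the increase $\delta := \sum_{C \ni i}\bigl(\Phi^C(a(t+1)^C) - \Phi^C(a(t)^C)\bigr) = \Phi(a(t+1)) - \Phi(a(t))$ is a positive integer, hence $\delta \ge 1$. If $i = k$, every clique through $k$ has $d(C) = 0$, so $\Psi$ increases by exactly $\delta \ge 1$. If $i \ne k$, write $r = \dist(k,i) \ge 1$ and use (i) to split the cliques through $i$ into a class with $d(C) = r-1$, whose total change I call $x$, and a class with $d(C) = r$, whose total change I call $y$; then $\Psi$ changes by $\lambda^{r-1}x + \lambda^r y = \lambda^{r-1}\bigl((x+y) - (1-\lambda)y\bigr)$. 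Here $x + y = \delta \ge 1$, and $|y| \le 2M$ times the number of cliques through $i$, which is at most $2MD$; since $1-\lambda = \frac{1}{2MD}$ this change is at least $\lambda^{r-1}\bigl(1 - (1-\lambda)\cdot 2MD\bigr) = 0$. So $\Psi$ is non-decreasing along the path and jumps up by at least $1$ each time $k$ moves.

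Finally, $|\Psi(a)| \le M \sum_C \lambda^{d(C)}$, so the total variation of $\Psi$ along the path (which is finite, of length $\le nDM$ by the earlier observation) is at most $2M\sum_C \lambda^{d(C)} \le 2DM \sum_{r \ge 0}\lambda^r S_r(G,k)$ by (ii), and the number of updates of $k$ is bounded by this quantity. The one delicate point is the single-step estimate for $i \ne k$: getting the two-distance-class splitting right and choosing $\lambda$ so that the extremal case $x+y=1$, $y = 2MD$ makes the change vanish exactly — this is what pins down $\lambda = 1-\frac{1}{2DM}$. Everything else is bookkeeping.
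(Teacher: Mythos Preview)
Your proof is correct and follows essentially the same approach as the paper: you define the same distance-weighted potential $\Psi=\sum_{C}\lambda^{d(C)}\Phi^C$ with $\lambda=1-\tfrac{1}{2DM}$, prove the same monotonicity lemma by splitting the cliques through the updating player into the two distance classes $\{r-1,r\}$, and bound the range of $\Psi$ via the same clique-counting argument. The only differences are presentational (you fold the lemma into the main argument and make the extremal choice of $\lambda$ explicit), not mathematical.
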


The proof appears in Section~\ref{sec:trm-proof}. Before that, we give
an example of an application, introduce a consequence of this theorem
on graphs that satisfy a ``slow enough growth'' condition, and an
example that demonstrates the tightness of this condition.

The next example is a particular case of the game introduced in
Section~\ref{sec:example}.
\begin{example}\label{ex:grid}
  Let $\mathbb{Z}^2_n$ be the $n \times n$ two dimensional grid, and
  consider the following game $\cG_n$ in which each player has two
  strategies. Label some subset of the edges blue and the rest
  red. The utility of each player is equal to the number of neighbors
  along blue edges that match her strategy, minus the number of
  neighbors along red edges that match her strategy. It is easy to see
  that $\cG$ is a graphical potential game on $\mathbb{Z}_n^2$, that
  local potentials are $1$-bounded, that the clique-degree of
  $\mathbb{Z}^2_n$ is four (since all maximal cliques are of size two
  and correspond to the edges), and that $S_r(\mathbb{Z}^2,i) \leq 4r$.

  Hence by Theorem~\ref{thm:num-changes}, in any better-response path,
  no player changes her strategy more than $1792$ times, independently
  of $n$.
\end{example}

We generalize this example to a larger class of games and graphs.

\begin{definition}\label{def:growth}
  Given a function $f:\mathbb{N}\rightarrow \mathbb{R}$ we say that
  the \emph{growth of a graph} $G$ is bounded by $f$ if for every
  vertex $i$ and every $r\in \mathbb{N}$ it holds that $S_r(G,i)\leq
  f(r)$.
\end{definition}

In order to gain some intuition about the notion of the growth of the
graph let us mention several graphs and their growths:
\begin{itemize}
\item For a line or a cycle with $n$ vertices, the growth is bounded
  by $f(r)=2$.

\item The growth of the $m$-dimensional grid is bounded by
  $f(r)=(2mr)^{m-1}$, which is \emph{polynomial in $r$} for a constant
  $m$.

\item The growth of a binary tree is at least $2^r$, and therefore is
  \emph{exponential in $r$}.
\end{itemize}

A straightforward corollary from Theorem \ref{thm:num-changes} is the following:

\begin{corollary}\label{cor:constant}
  Fix $M$, and let $G$ be a graph with click-degree $D$ and with
  growth that is bounded from above by the following exponentially
  increasing function
  \begin{align*}
    f(r)=c\left(1+\frac{1}{4DM}\right)^r
  \end{align*}
  for some constant $c>0$. Then for every potential game on the graph
  $G$ with $M$-bounded local potentials, the number of updates of
  every player along every better-response path is bounded by the
  constant $8cD^2 M^2$. In particular, this bound does not depend on the
  size of the graph.
\end{corollary}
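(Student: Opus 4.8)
The plan is to derive Corollary~\ref{cor:constant} directly from Theorem~\ref{thm:num-changes} by substituting the growth bound $S_r(G,k) \le f(r) = c(1+\tfrac{1}{4DM})^r$ into the sum and evaluating a geometric series. First I would write the bound from Theorem~\ref{thm:num-changes} as
\begin{align*}
  2DM\sum_{r=0}^\infty\left(1-\frac{1}{2DM}\right)^r S_r(G,k)
  \le 2DM\, c\sum_{r=0}^\infty\left[\left(1-\frac{1}{2DM}\right)\left(1+\frac{1}{4DM}\right)\right]^r.
\end{align*}
The key step is to check that the common ratio $q := (1-\tfrac{1}{2DM})(1+\tfrac{1}{4DM})$ is strictly less than $1$ and to bound $1/(1-q)$. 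Expanding, $q = 1 + \tfrac{1}{4DM} - \tfrac{1}{2DM} - \tfrac{1}{8D^2M^2} = 1 - \tfrac{1}{4DM} - \tfrac{1}{8D^2M^2}$, so $1 - q = \tfrac{1}{4DM} + \tfrac{1}{8D^2M^2} \ge \tfrac{1}{4DM}$, hence $1/(1-q) \le 4DM$.

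Putting this together, the geometric series converges and
\begin{align*}
  2DM\, c\sum_{r=0}^\infty q^r = \frac{2DM\,c}{1-q} \le 2DM\,c\cdot 4DM = 8cD^2M^2,
\end{align*}
which is exactly the claimed constant bound. Since this quantity involves only $c$, $D$, and $M$ and not the number of players $n$, the independence from the size of the graph is immediate.

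I do not anticipate a genuine obstacle here — the whole argument is the observation that the decay factor $1-\tfrac{1}{2DM}$ from Theorem~\ref{thm:num-changes} beats the growth factor $1+\tfrac{1}{4DM}$, which is precisely why the function $f$ in the statement was chosen with the constant $4DM$ in the exponent rather than $2DM$. The only minor point to be careful about is that $q > 0$ (so that the series is a legitimate geometric series and the manipulation is valid); this holds because $1 - \tfrac{1}{2DM} \ge 0$ whenever $DM \ge 1$, which is the case since $D \ge 1$ and $M \ge 1$ for any nontrivial integral game. One could also remark that the bound in Theorem~\ref{thm:num-changes} is increasing in $M$ for fixed $D$ (and conversely), so the same bound applies to any game with $M'$-bounded potentials for $M' \le M$, though this is not needed for the statement as written.
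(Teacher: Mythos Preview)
Your proposal is correct and follows essentially the same route as the paper: substitute the growth bound into the sum from Theorem~\ref{thm:num-changes} and evaluate the resulting geometric series. The paper bounds the ratio by $q \le 1-\tfrac{1}{4DM}$ and sums, while you equivalently bound $1/(1-q)\le 4DM$ directly; the arithmetic is the same, and your version is in fact slightly cleaner (the paper's displayed proof even has a stray ``$\geq$'' where ``$\leq$'' is intended).
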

Note that Corollary \ref{cor:constant} holds, in particular, for the
case where the graph has any subexponential growth.

\begin{proof}[Proof of Corollary \ref{cor:constant}]
  By Theorem \ref{thm:num-changes} we can bound the number of updates
  by
  \begin{align*}
    2DM\sum_{r=0}^\infty\left(1-\frac{1}{2DM}\right)^r c\left(1+\frac{1}{4DM}\right)^r \\
    \geq 2cDM\sum_{r=0}^\infty \left(1-\frac{1}{4DM}\right)^r = 8cD^2 M^2.
  \end{align*}
  
\end{proof}

The bound on the growth of the graph is crucial for the result of
Theorem \ref{thm:num-changes} (and Corollary \ref{cor:constant}), as
demonstrated by the following example of a sequence of graphs of
exponential growth, in which there is no uniform bound on the number
of strategy changes a player makes.

\begin{example}\label{ex:non-constant-changes}
  We denote by $BT_k=(V,E)$ the binary tree graph of depth $k$ and
  $n=2^{k+1}-1$ vertices. For $l=0,...,k$ we denote by
  $V_l=\{v^l_1,...,v^l_{2^l}\}$ the set of vertices at depth-level
  $l$.

  We consider the majority game $\cG$ on $BT_k$ in which each player
  has two strategies, and her utility is equal to the number of
  neighbors who played the same strategy. As in Example~\ref{ex:grid},
  it is easy to see that $\cG$ is a graphical potential game on
  $BT_k$, that local potentials are $1$-bounded, and that the
  clique-degree of $G$ is three, since again all maximal cliques
  correspond to edges. However, in this case $S_r(BT_k,i) \geq 2^r$
  for $r < k$, and so we cannot hope to use
  Theorem~\ref{thm:num-changes} to bound the number of changes without
  dependence on $n$. Indeed, we show that no such bound exists, so
  that, as $k$ grows, the number of strategy changes performed by the
  level 0 player diverges.

  Consider the following better-response path.\footnote{In fact, the
    presented path is also a best-response path.} In the initial
  configuration $a(0)$ we set the strategy of all players to be 1 at
  even depth-levels and (-1) at odd depth-levels (i.e.,
  $a(0)_v=(-1)^l$ for $v\in V_l$). The players update their strategies
  according to the following order, comprising $k$ ``waves'' of
  updates. The first wave consists of an update of the player at depth
  0 only. The second wave starts at all players at depth 1, and then
  continues with the player at depth 0. The $j$'th wave starts with
  all players at depth $j-1$, then continues with all players at depth
  $j-2$, then $j-3$ and so on, and finally ends with the player at
  depth 0. Formally, the order of updates is as follows:
  \begin{eqnarray*}
    & &\begin{cases}\text{Step 1.1: All the players in $V_0$.}
    \end{cases}\\
    & &\begin{cases} 
      \text{Step 2.1: All the players in $V_1$.}\\
      \text{Step 2.2: All the players in $V_0$.}
    \end{cases}\\
    & &\begin{cases} 
      \text{Step 3.1: All the players in $V_2$.}\\
      \text{Step 3.2: All the players in $V_1$.}\\
      \text{Step 3.3: All the players in $V_0$.}
    \end{cases}\\
    & &\vdots\\
    & &\text{Step } l.j \text{ for } 1\leq j \leq l \leq k \text{: All the players in } V_{l-j}.\\
    & &\vdots\\
    & &\begin{cases} \\
      \text{Step } k.k \text{: All the players in } V_0.
    \end{cases}
  \end{eqnarray*}
  where at each step $l.j$ the players in $V_{l-j}$ update their
  strategies in an arbitrary order.

  Note that at each step $l.j$, every player $v\in V_{l-j}$ indeed updates her strategy, because the order is designed in such a way that both $v$'s
  neighbors at depth-level $l-j+1$ play the opposite strategy, and player
  $v$ has at most three neighbors. Note also that the root player
  $v^0_1$ updates her strategy $k= \log_2(\frac{n+1}{2})$ times. Namely
  the number of her updates is \emph{not} bounded by a constant. This
  is possible on a sequence of growing binary trees because their
  growth rate is too high. As Example~\ref{ex:grid} above shows, no
  such construction is possible on a sequence of (polynomially)
  growing grids.
\end{example}

\subsection{Proof of Theorem \ref{thm:num-changes}.}\label{sec:trm-proof}
Let $G$ be a graph of clique-degree $D$ with $n$ nodes, and let $i$ be
a vertex in $G$. Let $C$ be a clique in $G$. We define $\dist(i,C)$,
the distance between $i$ and $C$, as the minimal graph distance
between $i$ and a vertex in $C$.

Given a potential $\Phi$ with $M$-bounded local potentials on $G$, let
\begin{align*}
  \lambda = 1 - \frac{1}{2DM}.
\end{align*}
Fix a vertex $k$ in $G$, and define the potential $\Theta$ by
\begin{align*}
  \Theta = \sum_{C \in \cC(G)}\lambda^{\dist(k,C)}\Phi^C.
\end{align*}

By the definition of better-response dynamics, $\Phi$ increases along
any better-response path. We claim that the same (weakly) holds for
$\Theta$. This follows from the next claim.
\begin{lemma}
  \label{clm:phi-theta}
  For every $a \in \cS$ and $b = (b_i,a_{-i}) \in \cS$ it holds
  that if $\Phi(b) > \Phi(a)$ then $\Theta(b) \geq \Theta(a)$.
\end{lemma}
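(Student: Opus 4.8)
The plan is to analyze the single update $a \to b = (b_i, a_{-i})$ and quantify how much $\Theta$ can change in terms of how much $\Phi$ changes. The key point is that the only local potentials $\Phi^C$ that are affected by changing player $i$'s strategy are those with $i \in C$, and there are at most $D$ of them. Write $\Delta^C := \Phi^C(b^C) - \Phi^C(a^C)$ for each such clique, so that $\Phi(b) - \Phi(a) = \sum_{C \ni i}\Delta^C$ and $\Theta(b) - \Theta(a) = \sum_{C \ni i}\lambda^{\dist(k,C)}\Delta^C$. Since each $|\Phi^C| \le M$, we have $|\Delta^C| \le 2M$ for every clique, and there are at most $D$ cliques containing $i$.

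First I would set $r_i := \dist(k,i)$ and observe that for any clique $C$ containing $i$, we have $\dist(k,C) \in \{r_i - 1, r_i, r_i + 1\}$ (at worst; in fact $\dist(k,C) \ge r_i - 1$ since a vertex of $C$ adjacent to $i$ is at distance at least $r_i - 1$ from $k$, and $\dist(k,C) \le r_i$ since $i \in C$). Hence each weight $\lambda^{\dist(k,C)}$ lies between $\lambda^{r_i}$ and $\lambda^{r_i - 1}$. The idea is then to bound $\Theta(b) - \Theta(a)$ from below by pulling out a common factor $\lambda^{r_i}$: write $\Theta(b) - \Theta(a) = \lambda^{r_i}\sum_{C \ni i}\lambda^{\dist(k,C) - r_i}\Delta^C$. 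For cliques with $\Delta^C \ge 0$ the weight $\lambda^{\dist(k,C) - r_i}$ is at least $1$ (if $\dist(k,C) \le r_i$) or at least $\lambda$; for cliques with $\Delta^C < 0$ the weight is at most $\lambda^{-1}$. The crudest safe bound replaces every weight by its extreme value and uses the integrality assumption.

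The main obstacle — and the reason the constant $\lambda = 1 - \tfrac{1}{2DM}$ is chosen exactly as it is — is handling the cliques where $\Delta^C$ has the ``wrong'' sign relative to the overall increase of $\Phi$. Since we only know $\sum_{C\ni i}\Delta^C = \Phi(b) - \Phi(a) \ge 1$ (here integrality of the local potentials is essential: a strict better response raises $\Phi$ by at least $1$), the negative contributions to $\Theta(b)-\Theta(a)$ come weighted by at most $\lambda^{r_i-1}$ in absolute value but the positive ones only by at least $\lambda^{r_i+1}$ (or $\lambda^{r_i}$ for cliques with $i$ the closest point). I would bound $\Theta(b) - \Theta(a) \ge \lambda^{r_i+1}\big(\sum_{\Delta^C > 0}\Delta^C\big) - \lambda^{r_i-1}\big(\sum_{\Delta^C < 0}|\Delta^C|\big)$ and then, writing $P = \sum_{\Delta^C>0}\Delta^C$ and $N = \sum_{\Delta^C<0}|\Delta^C|$ so that $P - N \ge 1$ and $N \le (D-1)\cdot 2M$ (the clique containing $i$ that carries the positive change cannot also carry negative change; more simply $N \le 2DM - 1$ when $P - N \ge 1$ and each term is bounded), I would check that the choice $\lambda = 1 - \tfrac1{2DM}$ makes $\lambda^2 P - N \ge \lambda^2(N+1) - N = N(\lambda^2 - 1) + \lambda^2 \ge 0$, using $1 - \lambda^2 \le 2(1-\lambda) = \tfrac1{DM}$ and $N \le 2DM\,\lambda^2$ (a consequence of the bounds on $N$), so that the whole expression is nonnegative. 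I would double-check the edge arithmetic — in particular whether one needs $\dist(k,C) \le r_i$ for the positive cliques or the weaker $\dist(k,C) \le r_i + 1$ — since this determines whether the exponent gap is $1$ or $2$ and hence whether one needs $\lambda$ or $\lambda^2$; the stated $\lambda$ and the factor $2DM$ strongly suggest the gap-$1$ analysis with $1 - \lambda = \tfrac1{2DM}$ suffices, and I would reconcile the argument with that.
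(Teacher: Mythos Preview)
You have all the right ingredients, and you even correctly note that for any maximal clique $C$ containing $i$ one has $\dist(k,C)\in\{r_i-1,r_i\}$ (since $i\in C$ forces $\dist(k,C)\le r_i$, and every other vertex of $C$ is a neighbor of $i$, so $\dist(k,C)\ge r_i-1$). But you then lower-bound the positive contributions by $\lambda^{r_i+1}$ rather than $\lambda^{r_i}$; this slip is the source of the spurious $\lambda^2$ and the arithmetic you cannot quite close. With the correct bound
\[
\Theta(b)-\Theta(a)\ \ge\ \lambda^{r_i}P-\lambda^{r_i-1}N\ =\ \lambda^{r_i-1}\bigl(\lambda P-N\bigr),
\]
the inequality $\lambda P\ge N$ follows immediately from $P\ge N+1$ (integrality) together with $N\le P-1\le 2DM-1=2DM\lambda$, so your sign-splitting route does go through once the exponent is fixed. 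The ``gap'' is indeed $1$, as you suspected at the end; there is nothing further to reconcile.

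The paper's argument is organized a bit differently and is somewhat cleaner: instead of splitting by the sign of $\Delta^C$, it splits by whether $\dist(k,C)=r_i$ or $r_i-1$ (call these families $\cC_1$ and $\cC_2$), factors out $\lambda^{r_i-1}$, and rewrites
\[
\frac{\Theta(b)-\Theta(a)}{\lambda^{r_i-1}}
=\lambda\sum_{C\in\cC_1}\Delta^C+\sum_{C\in\cC_2}\Delta^C
=\bigl[\Phi(b)-\Phi(a)\bigr]-(1-\lambda)\sum_{C\in\cC_1}\Delta^C.
\]
Now $\Phi(b)-\Phi(a)\ge 1$ by integrality, while $(1-\lambda)\sum_{\cC_1}\Delta^C\le \tfrac{1}{2DM}\cdot D\cdot 2M=1$, so the right-hand side is nonnegative with no further case analysis. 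This makes it transparent why $\lambda=1-\tfrac{1}{2DM}$ is exactly the right constant, whereas in your version that choice only emerges after balancing $P$ against $N$.
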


In other words, the lemma claims that $\Theta$ is an \emph{ordinal
  potential} (see Monderer and Shapley~\cite{MS}) of the game $\cG$.
\begin{proof}
  Fix $a$ and $b$ such that $\Phi(b) > \Phi(a)$. Since they differ only in the strategy of $i$ then
  \begin{align}
    \label{eq:diff-phi}
    \Phi(b) - \Phi(a) = \sum_{C \in \cC(G)\,:\,i
      \in C}\big[\Phi^C(b) - \Phi^C(a)\big],
  \end{align}
  and likewise
  \begin{align}
    \label{eq:diff-theta}
    \Theta(b) - \Theta(a) = \sum_{C \in \cC(G)\,:\,i
      \in C}\lambda^{\dist(k,C)}\big[\Phi^C(b) - \Phi^C(a)\big].
  \end{align}
  Now, note that for any clique $C$ that includes $i$ it holds that either $\dist(k,C)=\dist(k,i)$ or else $\dist(k,C)=\dist(k,i)-1$. Hence, if we denote
  \begin{align*}
    \cC_1 = \{C \in \cC(G)\,:\,i \in C, \dist(k,C)=\dist(k,i)\}
  \end{align*}
  and
  \begin{align*}
    \cC_2 = \{C \in \cC(G)\,:\,i \in C, \dist(k,C)=\dist(k,i)-1\}
  \end{align*}
  then we can write~\eqref{eq:diff-theta} as
  \begin{align*}
    \frac{\Theta(b) - \Theta(a)}{\lambda^{\dist(k,C)-1}} &=
    \lambda\sum_{C\in \cC_1}\big[\Phi^C(b) - \Phi^C(a)\big] +
    \sum_{C\in \cC_2}\big[\Phi^C(b) -
    \Phi^C(a)\big]\\
    &= \sum_{C \in \cC(G)\,:\,i \in C}\big[\Phi^C(b) -
    \Phi^C(a)\big] - (1-\lambda)\sum_{C\in \cC_1}\big[\Phi^C(b) -
    \Phi^C(a)\big].
  \end{align*}
  By~\eqref{eq:diff-phi} this implies that
  \begin{align*}
    \frac{\Theta(b) - \Theta(a)}{\lambda^{\dist(k,C)-1}} \geq
    \Phi(b)-\Phi(a)-(1-\lambda)\sum_{\cC_1} \big[\Phi^C(b) -
    \Phi^C(a)\big],
  \end{align*}
  and by the definition of $\lambda$
  \begin{align*}
    \geq \Phi(b)-\Phi(a)-\frac{1}{2DM}\sum_{\cC_1} \big[\Phi^C(b) -
    \Phi^C(a)\big].
  \end{align*}
  Now, $\Phi(b) > \Phi(a)$ implies $\Phi(b) - \Phi(a) \geq 1$, since
  the potential is integer. Hence
  \begin{align*}
    \geq
    1-\frac{1}{2DM}\sum_{\cC_1}
      \big[\Phi^C(b) - \Phi^C(a)\big].
  \end{align*}
  But the summand has at most $D$ terms, each one of which is
  strictly less than $2M$ (because the local potentials are bounded by $M$).  Hence
  \begin{align*}
    \frac{\Theta(b) - \Theta(a)}{\lambda^{\dist(k,C)-1}} \geq 0,
  \end{align*}
  which completes the proof of the claim.
\end{proof}

By the definition of $\Theta$ we have that
\begin{align*}
  |\Theta(a)| \leq \sum_{C \in \cC(G)}\lambda^{\dist(k,C)}|\Phi^C(a)|.
\end{align*}
Since $|\Phi^C(a)| \leq M$ then
\begin{align*}
  |\Theta(a)| &\leq \sum_{C \in \cC(G)}\lambda^{\dist(k,C)}M\\
  &= M \cdot \sum_{r=0}^\infty\lambda^r\cdot\#\{C \in \cC(G)\,:\,\dist(k,C)=r\}.
\end{align*}
Now, the number of cliques at distance $r$ from $k$ is at most $D$
times the number of vertices at distance $r$ from $k$. Hence
\begin{align*}
  |\Theta(a)| \leq DM\sum_{r=0}^\infty\lambda^r S_r(G,k).
\end{align*}
Whenever player $k$ updates her strategy to a better one, $\Phi$
increases by at least 1, because the potential is integral. Note that
a change of strategy by player $k$ changes only the value of the local
potentials $\Phi^C$ such that $k\in C$ (i.e.,
$\dist(k,C)=0$). Therefore, an update of player $k$ causes an
increment by at least 1 (again because the potential is integral) of
the potential
\begin{align}\label{eq:dist0}
\sum_{C \in \cC(G)\,:\,\dist(k,C)=0} \Phi^C = \sum_{C \in \cC(G)\,:\,\dist(k,C)=0} \lambda^{\dist(k,C)} \Phi^C
\end{align}
and causes no change in the potential
\begin{align}\label{eq:dist1}
\sum_{C \in \cC(G)\,:\,\dist(k,C)\geq 1} \lambda^{\dist(k,C)} \Phi^C.
\end{align}
Namely, every better response of player $k$ causes an increment by at
least 1 of $\Theta$, which is the sum the potentials
in~\eqref{eq:dist0} and~\eqref{eq:dist1}.

By Lemma~\ref{clm:phi-theta} $\Theta$ is non-decreasing along a better
response path. Hence the total number of times that player $k$ updates
her strategy is at most
\begin{align*}
  2DM\sum_{r=0}^\infty\lambda^r S_r(G,k).
\end{align*}

Finally, since $\lambda = 1-1/(2DM)$, it follows that the total number
of times that player $k$ updates her strategy is at most
\begin{align*}
  2DM\sum_{r=0}^\infty\left(1-\frac{1}{2DM}\right)^r S_r(G,k).
\end{align*}
This completes the proof of Theorem~\ref{thm:num-changes}.

\section{Generalized graphical potential games.}
\label{sec:generalizations}

Let $G=(V,E)$ be a countably infinite graph, where the degree of each
node is finite.

\begin{definition}
  \label{def:generalized}
  A graphical game $\cG$ on $G$ is a {\em generalized} graphical
  potential game if it is strategically equivalent to a game whose
  utilities are given by
  \begin{align*}
    u_i(a) = \sum_{C \in \cC(G)\,:\,i \in C}\Phi^C(a^C)
  \end{align*}
  for some local potentials $\{\Phi^C\}_{C \in \cC}$.
\end{definition}
Note that $\cG$ is not necessarily a potential game, as the sum of the
local potentials may diverge. However, given the decomposition of
graphical potential games into local potentials
(Theorem~\ref{claim:graphical-potential}), we find that this is a
natural generalization. Indeed, we show that like potential games,
generalized potential games always have pure Nash equilibria. This
follows from a compactness argument.
\begin{theorem}
  \label{thm:generalized-equi}
  Every generalized potential game has a pure Nash
  equilibrium. 
\end{theorem}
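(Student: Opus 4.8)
The plan is to first reduce to the canonical form and then run a compactness argument over an exhausting sequence of finite subgraphs. By Definition~\ref{def:generalized}, and the fact noted after Definition~\ref{def:strat-equ} that strategically equivalent games have the same Nash equilibria, it suffices to treat a game with utilities $u_i(a) = \sum_{C \in \cC(G)\,:\,i \in C}\Phi^C(a^C)$. The structural input is \emph{locality}: since $G$ is locally finite, every clique containing a vertex $i$ is contained in $\{i\}\cup N(i)$, hence is finite, and only finitely many maximal cliques contain $i$; therefore $u_i$ is a finite sum and depends only on the coordinates $a_j$ with $j \in \{i\}\cup N(i)$, a finite set. Also, each $\cS_i$ being finite, $\strats = \prod_{i \in V}\cS_i$ is compact in the product topology (by Tychonoff), and moreover metrizable and sequentially compact since $V$ is countable.

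Next I would exhaust $V$ by finite sets $W_1 \subseteq W_2 \subseteq \cdots$ with $\bigcup_n W_n = V$, fix the distinguished profile $o = (o_i)_{i \in V}$, and for each $n$ consider the finite game $\cG_n$ whose players are the vertices of $W_n$ (each with its own strategy set $\cS_i$), with all coordinates outside $W_n$ frozen at $o$. Exactly as in the proof of Theorem~\ref{pro:graphical-equivalence}, $\cG_n$ is a finite potential game with potential $\Phi_n(a^{W_n}) = \sum_{C\,:\,C \cap W_n \neq \emptyset}\Phi^C(a^C)$, where coordinates of $C$ outside $W_n$ are read off from $o$; this is a finite sum because each vertex of the finite set $W_n$ lies in only finitely many cliques, so that only finitely many cliques meet $W_n$. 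A profile maximizing $\Phi_n$ over the finite set $\prod_{i \in W_n}\cS_i$ is a pure Nash equilibrium $a(n)$ of $\cG_n$ (if some $i \in W_n$ could improve, $\Phi_n$ would strictly increase). Let $\tilde a(n) \in \strats$ be the extension of $a(n)$ by $o$ outside $W_n$.

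Finally, by sequential compactness of $\strats$ (equivalently, a diagonal extraction over a fixed enumeration of $V$), pass to a subsequence along which $\tilde a(n) \to a^*$ in the product topology; that is, for every $j \in V$ one has $\tilde a(n)_j = a^*_j$ for all large $n$ along the subsequence. I claim $a^*$ is a pure Nash equilibrium. Fix a player $i$ and an alternative strategy $b_i \in \cS_i$, and choose $n$ in the subsequence large enough that $i \in W_n$ and $\tilde a(n)$ agrees with $a^*$ on the finite set $\{i\}\cup N(i)$. Since $u_i$ depends only on those coordinates, $u_i(b_i, a^*_{-i}) = u_i(b_i, \tilde a(n)_{-i})$ and $u_i(a^*) = u_i(\tilde a(n))$. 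Since $a(n)$ is a Nash equilibrium of $\cG_n$ and the utility of $i$ in $\cG_n$ is precisely $u_i$ evaluated with the outside coordinates taken from $o$ (which is exactly how $\tilde a(n)$ is defined), we get $u_i(\tilde a(n)) \geq u_i(b_i, \tilde a(n)_{-i})$. Chaining these three relations yields $u_i(a^*) \geq u_i(b_i, a^*_{-i})$, which proves the claim. The only real subtlety — hence the step to carry out carefully — is the bookkeeping that each $\cG_n$ is a genuine finite potential game with the stated \emph{finite} potential, and that "best responding in $\cG_n$'' coincides with "best responding in $\cG$'' for a player whose relevant neighborhood $\{i\}\cup N(i)$ has stabilized; both follow immediately from the locality observation of the first paragraph.
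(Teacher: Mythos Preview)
Your proof is correct and follows essentially the same approach as the paper's: construct finite potential games on an exhausting family of finite vertex sets (the paper uses balls $B_r(i)$ around a fixed player, you use an arbitrary exhaustion $W_n$), take pure equilibria there, and extract a convergent subsequence in the compact product space $\strats$ whose limit is an equilibrium by locality of the utilities. Your write-up is more detailed than the paper's (which leaves the final verification as ``easy to see''), but the strategy is identical.
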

\begin{proof}
  Equip $\strats = \prod_i\cS_i$ with the product topology. Fix a
  player $i$, and let $B_r = \cup_{s \leq r}S_s$ be the set of players
  at distance at most $r$ from $i$.

  For each $r > 0$, choose a strategy profile $a_r \in \strats$ such
  that every player in $B_r(i)$ is already best responding. This is
  possible, since if we arbitrarily fix the strategies of the players
  outside $B_r(i)$, the induced game on the players in $B_r(i)$ is a
  potential game, and therefore has a pure Nash equilibrium. This
  follows from the definition of generalized potential games by local
  potentials.
 
  Since $\strats$ is compact, the sequence $a_r$ will have a
  converging subsequence, with some limit $a \in \strats$. It is easy
  to see that $a$ is a Nash equilibrium. 
\end{proof}

Note that unlike finite potential games, in a generalized potential
game a better-response path does not necessarily reach an
equilibrium. However, under the appropriate conditions on the game and
growth rate of the graph, we now show that every better-response path
converges to an equilibrium.

To this end, we define the clique-degree of an infinite graph as for
finite graphs, although in this case it may diverge. Likewise, the
definition of $M$-boundedness can be used, since it only depends on
the local potentials.

The statement of the next theorem is identical to that of
Theorem~\ref{thm:num-changes}, with the exception that it refers more
widely to generalized graphical potential games. An inspection of the
proof of Theorem~\ref{thm:num-changes} will reveal that it applies
here too.
\begin{theorem}
  \label{thm:generalized}
  Let $\cG$ be an integral generalized potential game on the graph $G$
  with $M$-bounded local potentials, where $G$ has clique-degree
  $D$. Then the number of times that a player $k$ updates her strategy
  in any better-response path is at most
  \begin{align*}
    2DM\sum_{r=0}^\infty\left(1-\frac{1}{2DM}\right)^r S_r(G,k).
  \end{align*}
\end{theorem}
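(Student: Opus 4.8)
The plan is to verify that every step in the proof of Theorem~\ref{thm:num-changes} goes through verbatim in the infinite setting, taking care only at the points where finiteness of $V$ was implicitly used. The key objects carry over unchanged: fix the vertex $k$, set $\lambda = 1 - \frac{1}{2DM}$, and define the re-weighted potential
\begin{align*}
  \Theta = \sum_{C \in \cC(G)}\lambda^{\dist(k,C)}\Phi^C.
\end{align*}
Since $G$ is countable and locally finite, $\cC(G)$ is countable, so this is a well-defined (possibly infinite) formal sum; but we never need $\Theta$ to be finite at a given profile — we only ever use \emph{differences} $\Theta(b) - \Theta(a)$ for profiles $b,a$ that differ in a single coordinate $i$, and that difference involves only the finitely many cliques containing $i$ (of which there are at most $D$), hence is always a well-defined real number. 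With this understanding, Lemma~\ref{clm:phi-theta} and its proof apply word for word: the partition of the cliques through $i$ into $\cC_1$ (those at distance $\dist(k,i)$ from $k$) and $\cC_2$ (those at distance $\dist(k,i)-1$), the estimate using integrality ($\Phi(b) - \Phi(a) \geq 1$ whenever $\Phi(b) > \Phi(a)$), and the bound $|\cC_1| \leq D$ with each local potential $<2M$ in absolute value are all local statements, insensitive to $|V|$.

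Next I would redo the potential-difference telescoping argument. As in the finite case, every time player $k$ updates to a strict better response, $\Phi$ increases by at least $1$; this change affects only the local potentials $\Phi^C$ with $k \in C$ (i.e. $\dist(k,C)=0$), so by integrality the quantity $\sum_{C:\dist(k,C)=0}\Phi^C$ — which equals $\sum_{C:\dist(k,C)=0}\lambda^{\dist(k,C)}\Phi^C$ — increases by at least $1$, while the complementary sum $\sum_{C:\dist(k,C)\geq 1}\lambda^{\dist(k,C)}\Phi^C$ is unchanged. Hence each update of $k$ increments $\Theta$ (interpreted as the sum of these two pieces, both well-defined) by at least $1$. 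By Lemma~\ref{clm:phi-theta}, $\Theta$ is non-decreasing along the whole better-response path. Therefore the number of updates of $k$ is bounded by the total variation budget of $\Theta$ accessible to $k$, which is at most
\begin{align*}
  2 \cdot \sup_a \Big|\sum_{C \in \cC(G)}\lambda^{\dist(k,C)}\Phi^C(a^C)\Big| \leq 2M\sum_{r=0}^\infty \lambda^r\,\#\{C:\dist(k,C)=r\} \leq 2DM\sum_{r=0}^\infty \lambda^r S_r(G,k),
\end{align*}
using $|\Phi^C|\leq M$ and the fact that the number of maximal cliques at distance $r$ from $k$ is at most $D\,S_r(G,k)$. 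Substituting $\lambda = 1 - \frac{1}{2DM}$ gives the claimed bound.

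The one genuinely new point to address — and the step I expect to be the main obstacle, or at least the one requiring the most care — is making rigorous the claim that the number of updates of $k$ is controlled by $\sup_a|\Theta(a)|$ even when $\Theta$ itself may fail to converge at some profiles. The clean fix is never to evaluate $\Theta$ globally: instead, track the two partial sums $\Theta^{=0} := \sum_{C:\dist(k,C)=0}\Phi^C$ and $\Theta^{\geq 1} := \sum_{C:\dist(k,C)\geq 1}\lambda^{\dist(k,C)}\Phi^C$. The first is a finite sum (at most $D$ terms, since only cliques containing $k$ are at distance $0$), so it is bounded in absolute value by $DM$ and is integer-valued. The second is an absolutely convergent series — this is exactly where the geometric decay $\lambda^r$ together with the subexponential-type control of $S_r(G,k)$ matters, but note that \emph{no} growth hypothesis is needed merely for this qualitative convergence once we only want the bound to be the (possibly infinite) quantity $2DM\sum_r \lambda^r S_r(G,k)$; if that sum diverges the theorem is vacuous. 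Along the path, $\Theta^{=0}$ jumps up by $\geq 1$ at each update of $k$ and (by Lemma~\ref{clm:phi-theta} applied to updates of \emph{other} players together with the observation that those updates leave $\Theta^{\geq 1}$'s sign of change aligned with $\Theta$) the combined quantity $\Theta^{=0} + \Theta^{\geq 1}$ is monotone non-decreasing; since it is bounded above by $\sup_a|\Theta(a)| < \infty$ (under the stated growth bound) and each $k$-update costs at least $1$ unit of the total increase, the number of $k$-updates is at most twice that supremum, which is the asserted bound.
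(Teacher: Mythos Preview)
Your proposal is correct and follows exactly the approach the paper indicates: the paper's entire ``proof'' of Theorem~\ref{thm:generalized} is the remark that an inspection of the proof of Theorem~\ref{thm:num-changes} shows it applies verbatim, and you have carried out precisely that inspection. Your additional care in noting that only finite differences of $\Phi$ and $\Theta$ are ever used (so divergence of the global sums is harmless), and that the stated bound is vacuous when $\sum_r\lambda^rS_r(G,k)$ diverges, fills in the convergence details the paper leaves implicit but does not constitute a different argument.
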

Note that the bound in Theorem~\ref{thm:generalized} may be infinite
(for example, in graphs of sufficiently fast exponential growth) or
finite (for example, in graph of subexponential growth), in which case
it is easy to show that it is finite for all $k \in V$. When the bound
is infinite, the statement of the theorem is vacuous. However, when
the bound is finite, it follows that in any better response path the
strategies of all players converge. We end the paper by formally
stating this observation.
\begin{theorem}
  \label{thm:generalized2}
  Let $\cG$ be an integral generalized potential game on the graph $G$
  with $M$-bounded local potentials, where $G$ has clique-degree
  $D$, and where 
  \begin{align*}
    2DM\sum_{r=0}^\infty\left(1-\frac{1}{2DM}\right)^r S_r(G,k)
  \end{align*}
  is finite for some (equivalently, all) $k \in V$. In any
  better-response path (which may be infinite), each player makes only
  a finite number of strategy changes.
\end{theorem}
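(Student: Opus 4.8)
The plan is to reduce Theorem~\ref{thm:generalized2} to Theorem~\ref{thm:generalized} by a direct counting argument: if every single player makes only finitely many changes, then certainly the whole path stabilizes coordinate-by-coordinate, which is exactly the assertion. So the real content is to observe that the finiteness hypothesis, stated for one vertex $k$, propagates to all vertices, and that for each vertex the bound in Theorem~\ref{thm:generalized} is the \emph{actual} bound on that vertex's number of updates.

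First I would verify the ``some, equivalently all'' claim. Fix $k$ with $\sum_{r\ge0}\lambda^r S_r(G,k)<\infty$, where $\lambda = 1-\tfrac1{2DM}$. For any other vertex $k'$, let $d=\dist(k,k')$; then the ball of radius $r$ around $k'$ is contained in the ball of radius $r+d$ around $k$, so $\sum_{s\le r}S_s(G,k') \le \sum_{s\le r+d}S_s(G,k)$. Summing $\lambda^r S_r(G,k')$ against this and using that $\lambda<1$ (so $\lambda^r \le \lambda^{-d}\lambda^{r+d}$ cannot be used directly, but a partial-summation/telescoping estimate with the monotone weights $\lambda^r$ works) yields $\sum_r \lambda^r S_r(G,k') \le \lambda^{-d}\sum_r \lambda^r S_r(G,k) < \infty$. (Cleanest: $S_r(G,k') \le |B_{r+d}(G,k)| = \sum_{s=0}^{r+d} S_s(G,k)$, and $\sum_{r\ge 0}\lambda^r \sum_{s=0}^{r+d}S_s(G,k) = \sum_{s\ge 0} S_s(G,k)\sum_{r \ge \max(s-d,0)}\lambda^r \le \frac{1}{1-\lambda}\sum_{s\ge0}S_s(G,k)\lambda^{\max(s-d,0)} \le \frac{\lambda^{-d}}{1-\lambda}\sum_{s\ge0}\lambda^s S_s(G,k)$.) Hence the hypothesis holds for one vertex iff it holds for all.

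Next, given an arbitrary better-response path $(a(t))_{t}$, possibly infinite, fix any player $k$. By Theorem~\ref{thm:generalized} applied to $k$ (whose proof, as noted in the excerpt, goes through verbatim for generalized graphical potential games), the number of indices $t$ with $i(t)=k$ is at most $2DM\sum_{r\ge0}\lambda^r S_r(G,k)$, which is finite by the hypothesis together with the equivalence just established. Therefore there is a time $T_k$ after which player $k$ never updates, so the coordinate $a_k(t)$ is eventually constant. Since this holds for every $k\in V$, each player makes only finitely many strategy changes, which is precisely the statement. (One small remark worth including: for an \emph{infinite} path the argument of Theorem~\ref{thm:generalized} still applies because the monotone quantity $\Theta$, restricted to the relevant finite sum, increases by at least $1$ at each update of $k$ and stays within the stated absolute bound; no appeal to termination of the whole path is needed.)

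I do not expect a serious obstacle here — the result is essentially a bookkeeping corollary. The only point requiring a little care is the ``equivalently, all'' equivalence, i.e.\ the geometric-weight comparison of sphere sizes across base points; writing it so that the $\lambda<1$ factors land on the correct side takes one short computation, which I sketched above. Everything else is immediate from Theorem~\ref{thm:generalized}.
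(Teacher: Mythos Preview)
Your proposal is correct and follows exactly the route the paper intends: the paper presents Theorem~\ref{thm:generalized2} as a formal restatement of the observation that when the bound in Theorem~\ref{thm:generalized} is finite it is finite for every vertex, and then the conclusion is immediate. You have simply written out the ``equivalently, all'' step (which the paper dismisses with ``it is easy to show'') via the ball-inclusion and Fubini computation, and that computation is clean and correct.
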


\bibliography{potential}

\appendix
\section{Markov properties of positive random fields.}
\label{sec:pos-mrf}

Let $G=(V,E)$ be a finite simple graph with $|V|=n$, and let $X =
(X_1,\ldots,X_n)$ be random field on $G$. Recall that we say that $X$
is positive if $\P{X_1=a_1,\ldots,X_n=a_n} > 0$ whenever
$\P{X_i=a_i}>0$ for $i=1,\ldots,n$. We say that $X$ has the
{\em pairwise Markov property} if for all $(i,j) \not \in E$ it holds
that $X_i$ is independent of $X_j$, conditioned on $X_{V \setminus
  \{i,j\}}$. In this section we prove the following proposition.
\begin{proposition}
  \label{prop:markov-props}
  If $X$ is positive and has the pairwise Markov property then
  it is a Markov random field.
\end{proposition}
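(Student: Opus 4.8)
The plan is to run a variant of the Hammersley--Clifford argument in which the pairwise Markov property plays the role usually played by a separating cut, and then to read off the global Markov property from the resulting clique factorization. First I would reduce to the everywhere‑positive case: if $\P{X_i=a_i}=0$ for some $i$ and some $a_i\in\cS_i$, that strategy is never played, so it may be deleted from $\cS_i$ without changing the law of $X$ or any of its conditional independences; after all such deletions positivity gives $\P{X=a}>0$ for every $a\in\strats$. Assuming this, put $\Psi(a)=\log\P{X=a}\in\R$. Using the distinguished strategies $o_i$, for $T\subseteq V$ let $a_{[T]}$ denote the profile agreeing with $a$ on $T$ and equal to $o_i$ off $T$, and for $S\subseteq V$ define $\Psi_S(a)=\sum_{T\subseteq S}(-1)^{|S|-|T|}\Psi(a_{[T]})$. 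Then $\Psi_S$ depends only on the coordinates of $a$ in $S$, and M\"obius inversion gives $\Psi(a)=\sum_{S\subseteq V}\Psi_S(a)$.

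The heart of the proof is to show that $\Psi_S\equiv 0$ whenever $S$ is not a clique. Pick $i\neq j$ in $S$ with $(i,j)\notin E$ and set $S'=S\setminus\{i,j\}$; grouping the defining sum of $\Psi_S$ according to $T\cap\{i,j\}$ expresses $\Psi_S(a)$ as a signed sum, over $T'\subseteq S'$, of the brackets $\Psi(a_{[T']})-\Psi(a_{[T'\cup\{i\}]})-\Psi(a_{[T'\cup\{j\}]})+\Psi(a_{[T'\cup\{i,j\}]})$. Since the four profiles involved agree off $\{i,j\}$, each bracket equals
\[\log\frac{\P{X_i=o_i,\,X_j=o_j,\,\cX}\cdot\P{X_i=a_i,\,X_j=a_j,\,\cX}}{\P{X_i=a_i,\,X_j=o_j,\,\cX}\cdot\P{X_i=o_i,\,X_j=a_j,\,\cX}},\]
where $\cX$ is the event that $X_k$ takes the common value of these profiles at $k$, for each $k\notin\{i,j\}$. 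Dividing numerator and denominator by $\P{\cX}^2$ (positive, by positivity of the marginals) turns this into the corresponding ratio of conditional probabilities given $\cX$, and the pairwise Markov property $X_i\perp X_j\mid X_{V\setminus\{i,j\}}$ makes that ratio equal to $1$. Hence every bracket vanishes, so $\Psi_S\equiv 0$. This is exactly the computation in the proof of Theorem~\ref{claim:graphical-potential}, read in the opposite direction.

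It follows that $\Psi(a)$ equals a constant plus $\sum_{S\ \mathrm{clique}}\Psi_S(a)$, so, absorbing each clique into a maximal clique containing it, $\P{X=a}=\prod_{C\in\cC(G)}\phi_C(a^C)$ for suitable nonnegative $\phi_C$. From this clique factorization the global Markov property follows by the standard, positivity‑free easy direction of Hammersley--Clifford: given a $(U,W)$‑cut $A$, let $U^*$ be the union of the connected components of $G\setminus A$ meeting $U\setminus A$ and $W^*=(V\setminus A)\setminus U^*$; since $A$ is a $(U,W)$‑cut one has $W\setminus A\subseteq W^*$, and no clique can meet both $U^*$ and $W^*$, so the product splits as $\P{X=a}=g(a^{U^*\cup A})\,h(a^{W^*\cup A})$. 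This yields $X_{U^*}\perp X_{W^*}\mid X_A$ and hence $X_U\perp X_W\mid X_A$. Thus $X$ is a Markov random field.

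The only substantive step is the vanishing of $\Psi_S$ for non‑cliques; everything else is bookkeeping (the M\"obius inversion, the reduction to strict positivity, and the separation argument). An alternative would be the classical route pairwise $\Rightarrow$ local $\Rightarrow$ global via the intersection property of conditional independence, which holds for strictly positive distributions; but the M\"obius/Hammersley--Clifford argument above is more self‑contained and directly mirrors the proof of Theorem~\ref{claim:graphical-potential}.
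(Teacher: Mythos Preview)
Your argument is correct, but it follows a genuinely different route from the paper's. The paper never touches M\"obius inversion or the clique factorization here; instead it proves an \emph{amalgamation lemma} (Lemma~\ref{lem:amalgamate}): if $X$ is positive and pairwise Markov on $G$, then merging two nodes $i,j$ into a single node $(i,j)$ (with the random variable $(X_i,X_j)$ attached) yields a positive, pairwise Markov field on the amalgamated graph $G^{i,j}$. Given a $(U,W)$-cut $A$, the paper then repeatedly amalgamates until $U$, $A$, $W$ are each a single node; the resulting graph is (a subgraph of) the path $U\!-\!A\!-\!W$, and pairwise Markov on that path is precisely $X_U\perp X_W\mid X_A$.

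Your approach, by contrast, proves the Hammersley--Clifford factorization directly from the pairwise property and then reads off global Markov from the factorization. This buys you the clique decomposition as a byproduct and dovetails nicely with Theorem~\ref{claim:graphical-potential}, whose Hessian computation you are indeed reusing. The paper's approach is shorter and avoids the M\"obius bookkeeping entirely, and the amalgamation lemma is a clean, reusable statement about conditional independence; on the other hand, it has to be iterated and one must tacitly enlarge $U$ and $W$ so that $U,A,W$ partition $V$ before collapsing. Both arguments rely on positivity at the same point: you use it to ensure $\Psi=\log\P{X=\cdot}$ is finite and that the conditioning events have positive probability; the paper uses it so that the conditional probabilities in Lemma~\ref{lem:amalgamate} are well defined. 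Your final paragraph's ``alternative'' via the intersection property is in fact closer in spirit to what the paper does than your main argument is.
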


To prove this proposition we will need the following lemma.  Let
$G^{1,2}$ be the graph derived from $G$ by amalgamating the nodes $1$
and $2$ into a node $(1,2)$; the vertices of $G^{1,2}$ are $V^{1,2} =
\{(1,2), 3, \ldots, n\}$, and the edges of $G^{1,2}$ are
$$E^{1,2} = \{(i,j) \in E\,:\,i,j >2\} ~\cap~ \{((1,2),j)\,:\,j > 2,(1,j)
\in E \mbox{ or } (2,j) \in E\}. $$ Accordingly, let $X^{1,2} =
((X_1,X_2),X_3,\ldots,X_n)$ be the associated random field, where we
unite the two random variables $X_1$ and $X_2$ into the random
variable $(X_1,X_2)$, and associate it to the node $(1,2)$. Generally,
define $G^{i,j}$ and $X^{i,j}$ likewise.

\begin{lemma}
  \label{lem:amalgamate}
  When $X$ is positive and has the pairwise Markov property with
  respect to $G$, then $X^{i,j}$ is positive and has the pairwise
  Markov property, with respect to $G^{i,j}$.
\end{lemma}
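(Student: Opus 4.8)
The goal is to show that amalgamating two non-adjacent vertices $i,j$ preserves positivity and the pairwise Markov property. Without loss of generality take $i=1$, $j=2$. First I would verify positivity of $X^{1,2}$: this is essentially immediate, since the atoms of $X^{1,2}$ are the atoms of $X$ regrouped, and the marginal of $(X_1,X_2)$ puts positive mass on $(a_1,a_2)$ exactly when both $X_1=a_1$ and $X_2=a_2$ have positive probability — here I would invoke the pairwise Markov property (or rather positivity of $X$ together with the fact that $(1,2)\notin E$) to conclude that $\P{X_1=a_1,X_2=a_2}>0$, and hence that positivity of $X$ transfers to $X^{1,2}$.

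The substance is the pairwise Markov property for $X^{1,2}$. A pair of vertices of $G^{1,2}$ is either $\{(1,2),k\}$ for some $k>2$, or $\{k,\ell\}$ with $k,\ell>2$. I would handle the second case first: if $(k,\ell)\notin E^{1,2}$ then $(k,\ell)\notin E$, and the pairwise Markov property of $X$ gives that $X_k\perp X_\ell$ conditioned on $X_{V\setminus\{k,\ell\}}=(X_1,X_2,X_3,\dots)$ with $1,2$ deleted from the index set — but $X_{V\setminus\{k,\ell\}}$ carries exactly the same information as $X^{1,2}_{V^{1,2}\setminus\{k,\ell\}}$, since the latter just bundles $X_1,X_2$ together. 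So this case is a direct rephrasing. For the first case, suppose $((1,2),k)\notin E^{1,2}$; by definition of $E^{1,2}$ this means $(1,k)\notin E$ \emph{and} $(2,k)\notin E$. I need $X_k$ independent of $(X_1,X_2)$ conditioned on $X^{1,2}_{V^{1,2}\setminus\{(1,2),k\}} = X_{V\setminus\{1,2,k\}}$.

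The cleanest route here is to pass through the MRF property of $X$ itself. Since $X$ is positive and pairwise Markov, it is a genuine Markov random field — this is exactly Proposition~\ref{prop:markov-props}, whose proof in turn uses Lemma~\ref{lem:amalgamate}, so I should be careful to prove the lemma \emph{without} circularly invoking the proposition. The safe approach is to argue directly from the pairwise property via two applications plus positivity: since $(1,k)\notin E$, we have $X_1\perp X_k \mid X_{V\setminus\{1,k\}}$, and since $(2,k)\notin E$, we have $X_2\perp X_k \mid X_{V\setminus\{2,k\}}$. I would then combine these — conditioning everything on $X_{V\setminus\{1,2,k\}}$ and using positivity to manipulate the conditional densities — to get $(X_1,X_2)\perp X_k \mid X_{V\setminus\{1,2,k\}}$. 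Concretely, write conditional probabilities as ratios (legitimate by positivity), use the first independence to drop the dependence of $X_k$'s conditional law on $X_1$, then the second to drop its dependence on $X_2$; the surviving conditional law of $X_k$ depends only on $X_{V\setminus\{1,2,k\}}$, which is precisely the assertion.

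The main obstacle is this last combination step: na\"ively, "independent of $X_1$ given the rest" and "independent of $X_2$ given the rest" do not by themselves yield "independent of $(X_1,X_2)$ given $X_{V\setminus\{1,2,k\}}$" — one genuinely needs positivity (a counterexample exists without it), and the bookkeeping of which variables are conditioned on must be done carefully. I expect to spend most of the effort writing the ratio-of-probabilities argument cleanly, perhaps isolating a small sublemma of the form: if $X$ is positive, $A\perp B\mid (C,D)$ and $A\perp C\mid (B,D)$, then $A\perp (B,C)\mid D$ — which is a standard "intersection"-type property of conditional independence for positive distributions, and which I would prove by the density-factorization manipulation just sketched.
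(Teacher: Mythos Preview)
Your approach is essentially the same as the paper's: both reduce to the case of a vertex $k$ adjacent to neither $1$ nor $2$ and establish $(X_1,X_2)\perp X_k \mid X_{V\setminus\{1,2,k\}}$ via the intersection property of conditional independence for positive distributions, which the paper proves inline by exactly the conditional-probability manipulation you sketch (and the paper, unlike you, silently skips the trivial $\{k,\ell\}$ case). One small correction: the lemma does not assume $i$ and $j$ are non-adjacent---indeed the application in Proposition~\ref{prop:markov-props} amalgamates possibly adjacent vertices---but your argument never actually uses that assumption, and positivity of $X^{1,2}$ follows from positivity of $X$ alone with no appeal to $(1,2)\notin E$.
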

\begin{proof}
  Without loss of generality, we assume that $i=1$ and $j=2$.  It is
  immediate that $X^{1,2}$ is positive.
  
  Every vertex $k$ that is connected to either $1$ or $2$ in the graph
  $G$, is connected to $(1,2)$ in $G^{1,2}$. Therefore, for proving
  the pairwise property on the graph $G^{1,2}$ we will consider a
  vertex $k$ such that neither $(1,k)$ nor $(2,k)$ is an edge in $G$,
  so that $((1,2),k)$ is not an edge in $G^{1,2}$. Without loss of
  generality, we assume that $k=3$.  We will show that, conditioned on
  $Y = X_{V\setminus\{1,2,3\}}$, $(X_1,X_2)$ is independent of
  $X_3$. This will prove the claim.

  Denote by $\cS_i$ the (finite) support of $X_i$. We would like to
  show that for all $a_1 \in \cS_1,a_2\in \cS_2$ and $a_3 \in \cS_3$ it
  holds that
  \begin{align*}
    \lefteqn{\CondP{(X_1,X_2,X_3)=(a_1,a_2,a_3)}{Y}}\\ &=
    \CondP{(X_1,X_2)=(a_1,a_2)}{Y} \cdot \CondP{X_3=a_3}{Y}.
  \end{align*}
  
  Since $X$ is positive $\CondP{X_3=a_3}{Y,(X_1,X_2)=(a_1,a_2)}$ is
  well defined, and we can apply the pairwise Markov property of $X$
  once to $X_1,X_3$ and once to $X_2,X_3$ to arrive at
  \begin{align}
    \lefteqn{\CondP{X_3=a_3}{Y,(X_1,X_2)=(a_1,a_2)}} \nonumber \\
    &= \CondP{X_3=a_3}{Y,X_1=a_1} \label{eq:using-pairwise-markov}\\
    &= \CondP{X_3=a_3}{Y,X_2=a_2}.\label{eq:using-pairwise-again}
  \end{align}
  Since $X$ is positive we can apply the same argument with any $b_2
  \in \cS_2$ substituted for $a_2$, and conclude, as
  in~\eqref{eq:using-pairwise-again}, that
  \begin{align*}
    \CondP{X_3=a_3}{Y,X_1=a_1} = \CondP{X_3=a_3}{Y,X_2=b_2}
  \end{align*}
  for all $b_2 \in \cS_2$, and that therefore
  \begin{align*}
    \CondP{X_3=a_3}{Y,X_1=a_1} = \CondP{X_3=a_3}{Y}.
  \end{align*}
  Applying~\eqref{eq:using-pairwise-markov} now yields
  \begin{align*}
    \CondP{X_3=a_3}{Y,(X_1,X_2)=(a_1,a_2)} = \CondP{X_3=a_3}{Y},
  \end{align*}
  which proves the claim.
\end{proof}

\begin{proof}[Proof of Proposition~\ref{prop:markov-props}]
  Let $A$ be a $(U,W)$-cut. We would like to show that $X_U$ is
  independent of $X_W$, conditioned on $X_A$.

  Amalgamate all the vertices in $U$ to a single vertex, and likewise
  amalgamate $A$ and $W$. The resulting graph is (a subgraph of)
  $$G'=(V',E') = (\{U,A,W\},\{(U,A),(A,W)\}),$$
  and the associated random field is precisely $(X_U, X_A, X_W)$. By
  repeated applications of Lemma~\ref{lem:amalgamate} this graph has
  the pairwise Markov property, and hence $X_U$ is independent of
  $X_W$, conditioned on $X_A$.
\end{proof}

\end{document}